\theoremstyle{plain}
\newtheorem{thm}{Theorem}[section]
\newtheorem{lem}[thm]{Lemma}
\newtheorem{cor}[thm]{Corollary}
\theoremstyle{definition}
\theoremstyle{remark}
\newtheorem{rmk}[thm]{Remark}
\newtheorem{claim}[thm]{Claim}
\def\Z{{\mathbf Z}}
\def\Q{{\mathbf Q}}
\def\R{{\mathbf R}}
\def\C{{\mathbf C}}
\def\A{{\mathbf A}}
\def\cD{\mathcal{D}}
\def\cE{\mathcal{E}}
\def\cF{\mathcal{F}}
\def\cH{\mathcal{H}}
\def\cM{\mathcal{M}}
\def\cN{\mathcal{N}}
\def\cO{\mathcal{O}}
\def\cP{\mathcal{P}}
\def\bff{{\bf f}}
\def\bD{{\bf D}}
\def\.{\cdot}
\def\^{\widehat}
\def\de{\partial}
\def\({\left(}
\def\){\right)}
\renewcommand{\and}{ \ \ \text{ and } \ \ }
\def\gr{{\mathrm{Gr}}}
\def\DR{\mathrm{DR}}
\DeclareMathOperator{\coker} {coker}
\DeclareMathOperator{\lct} {lct}
\begin{document}

\author[Q.~Chen]{Qianyu Chen}

\address{Department of Mathematics, University of Michigan, 530 Church Street, Ann Arbor, MI 48109, USA}

\email{qyc@umich.edu}

\author[B.~Dirks]{Bradley Dirks}

\address{Department of Mathematics, University of Michigan, 530 Church Street, Ann Arbor, MI 48109, USA}

\email{bdirks@umich.edu}

\author[M.~Musta\c{t}\u{a}]{Mircea Musta\c{t}\u{a}}

\address{Department of Mathematics, University of Michigan, 530 Church Street, Ann Arbor, MI 48109, USA}

\email{mmustata@umich.edu}

\thanks{Q.C. was partially supported by NSF grant DMS-1952399 and M.M. was partially supported by NSF grants DMS-2001132 and DMS-1952399.}

\subjclass[2020]{14F10, 14B05, 14J17, 32S35}

\begin{abstract}
We show that if $Z$ is a local complete intersection subvariety of a smooth complex variety $X$, of pure codimension $r$, then
$Z$ has $k$-rational singularities if and only if $\widetilde{\alpha}(Z)>k+r$, where $\widetilde{\alpha}(Z)$ is the minimal exponent of $Z$. 
We also characterize this condition in terms of the Hodge filtration on the intersection complex Hodge module of $Z$. Furthermore,
we show that if $Z$ has $k$-rational singularities, then
the Hodge filtration on the local cohomology sheaf $\cH^r_Z(\cO_X)$ is generated
at level $\dim(X)-\lceil \widetilde{\alpha}(Z)\rceil-1$ and, assuming that $k\geq 1$ and $Z$ is singular, of dimension $d$, that $\cH^k(\underline{\Omega}_Z^{d-k})\neq 0$.
All these results have been known for hypersurfaces in smooth varieties.
\end{abstract}

\title[The minimal exponent and $k$-rationality] {The minimal exponent and $k$-rationality for local complete intersections}

\maketitle

\section{Introduction}

It is well-known that rational and Du Bois singularities play an important role in the hierarchy of singularities of higher-dimensional algebraic varieties. 
Recently, definitions of ``higher order" versions of these classes of singularities have been proposed, as follows. Suppose that $Z$ is a complex algebraic variety. 
If $\underline{\Omega}_Z^p$ is the  $p$-th graded piece of the Du Bois complex of $Z$ (suitably shifted), then there is a canonical morphism 
$$\Omega_Z^p\to\underline{\Omega}_Z^p$$
that is an isomorphism over the smooth locus of $Z$. Following \cite{Saito_et_al}, we say that $Z$ has $k$-Du Bois singularities if this morphism is an isomorphism
for $0\leq p\leq k$. For $k=0$, we recover the definition of Du Bois singularities.

On the other hand, if $\mu\colon\widetilde{Z}\to Z$ is a resolution of singularities that is an isomorphism over $Z\smallsetminus Z_{\rm sing}$
and such that $D=\mu^{-1}(Z_{\rm sing})$ is a simple normal crossing divisor on $\widetilde{Z}$, then following \cite{FL1} we say that $Z$ has
$k$-rational singularities if the canonical morphism
$$\Omega_Z^p\to {\mathbf R}\mu_*\Omega_{\widetilde{Z}}^p({\rm log}\,D)$$
is an isomorphism for $0\leq p\leq k$. Again, for $k=0$ this is the classical notion of rational singularities. 
Our main goal in this note is to characterize numerically, in the case when $Z$ is locally a complete intersection, the condition for having $k$-rational singularities.
A similar characterization for $k$-Du Bois local complete intersections has been obtained in \cite{MP1}, extending work on hypersurfaces in \cite{MOPW}
and \cite{Saito_et_al}.

Suppose that $X$ is a smooth, irreducible, $n$-dimensional complex algebraic variety and $Z$ is a local complete intersection closed subscheme of $X$,
of pure codimension $r$ in $X$. In this setting the \emph{minimal exponent} $\widetilde{\alpha}(Z)$ was introduced and studied in \cite{CDMO}. In the case
$r=1$, this is the invariant introduced by Saito in \cite{Saito_microlocal} as the negative of the largest root of the reduced Bernstein-Sato polynomial of $Z$.
In general, $\widetilde{\alpha}(Z)$ can be described in terms of the Kashiwara-Malgrange $V$-filtration associated to $Z$ and it is also related to the Hodge filtration on the 
local cohomology sheaf $\cH_Z^r(\cO_X)$. The minimal exponent can be considered as a refinement of the log canonical threshold of $(X,Z)$: we always have
${\rm lct}(X,Z)=\min\big\{\widetilde{\alpha}(Z),r\}$. Moreover, it is shown in \cite{CDMO} that $\widetilde{\alpha}(Z)>r$ if and only if $Z$ has rational singularities,
extending a result due to Saito \cite{Saito-B} in the case of hypersurfaces.

The following is our main result:

\begin{thm}\label{thm1_intro}
If $Z$ is a local complete intersection subvariety of the smooth, irreducible variety $X$, of pure codimension $r$, then $Z$ has $k$-rational singularities
if and only if $\widetilde{\alpha}(Z)>k+r$.
\end{thm}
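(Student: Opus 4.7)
The plan is to proceed in three steps, in parallel with the characterization of $k$-Du Bois singularities for local complete intersections in \cite{MP1}, but with an extra layer of Hodge-theoretic input involving the intersection complex Hodge module $IC_Z^H$. First, I would use the canonical factorization
$$\Omega_Z^p \longrightarrow \underline{\Omega}_Z^p \longrightarrow {\mathbf R}\mu_*\Omega_{\widetilde{Z}}^p({\rm log}\,D)$$
together with standard arguments (see \cite{FL1}) to conclude that $k$-rational singularities are in particular $k$-Du Bois, so by \cite{MP1} the inequality $\widetilde{\alpha}(Z)\geq k+r$ already holds. The substantive task is therefore to upgrade this to the strict inequality $\widetilde{\alpha}(Z)>k+r$, i.e.\ to detect the additional condition $\underline{\Omega}_Z^p \simeq {\mathbf R}\mu_*\Omega_{\widetilde{Z}}^p({\rm log}\,D)$ for $p\leq k$.

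Second, I would establish the intermediate characterization promised in the abstract: $k$-rationality is equivalent to a maximality condition on the Hodge filtration of $IC_Z^H$ in levels $0,\ldots,k$. The key input is a Saito-type identification, up to a shift, of ${\mathbf R}\mu_*\Omega_{\widetilde{Z}}^p({\rm log}\,D)$ with the graded piece $\gr^F_{-p}\,{\rm DR}(IC_Z^H)$, whereas $\underline{\Omega}_Z^p$ is computed by the analogous graded piece of the de Rham complex of a strictly larger Hodge module $\cM_Z$ built as a subquotient of $\cH^r_Z(\cO_X)$. The $k$-rational condition then becomes the statement that the natural surjection $\cM_Z \twoheadrightarrow IC_Z^H$ induces an isomorphism on Hodge graded pieces of level $\leq k$.

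Third, I would translate the Hodge-filtration condition on $IC_Z^H$ into the V-filtration condition defining $\widetilde{\alpha}(Z)$. Using the graph embedding $\Gamma : X \hookrightarrow X\times{\mathbf A}^r$, $x\mapsto(x,f_1(x),\ldots,f_r(x))$, the Hodge filtration on $\Gamma_+\cO_X$ is controlled by the Kashiwara--Malgrange V-filtration along the coordinates $t_1,\ldots,t_r$, exactly as in the definition of $\widetilde{\alpha}(Z)$ given in \cite{CDMO}. The failure of the Hodge graded-piece isomorphism at level $k$ is detected by the presence of $-(k+r)$ among the relevant Bernstein--Sato-type roots, so its absence is equivalent to $\widetilde{\alpha}(Z)>k+r$.

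The main obstacle is Step 2: for hypersurfaces ($r=1$) the Hodge module $\cH^1_Z(\cO_X)/\cO_Z$ already has a pure weight structure with a clean relation to $IC_Z^H$, but for higher codimension $\cH^r_Z(\cO_X)$ is no longer pure, and one must identify the correct intermediate Hodge module $\cM_Z$ whose graded pieces realize $\underline{\Omega}_Z^p$ and whose comparison map to $IC_Z^H$ captures exactly the discrepancy between $k$-Du Bois and $k$-rational singularities.
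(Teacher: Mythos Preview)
Your Step~2 contains a genuine gap: the claimed ``Saito-type identification'' of ${\mathbf R}\mu_*\Omega_{\widetilde{Z}}^p({\rm log}\,D)$ with $\gr^F_{-p}{\rm DR}({\rm IC}_Z\Q^H)$ is not correct. The log de Rham pushforward computes $\gr^F_{-p}{\rm DR}$ of the \emph{full} open pushforward $j'_*\Q_U^H[d]$ (where $j'\colon U=Z\smallsetminus Z_{\rm sing}\hookrightarrow Z$), not of the intermediate extension ${\rm IC}_Z\Q^H=j'_{!*}\Q_U^H[d]$; these differ in general. Consequently, the equivalence ``$k$-rational $\Leftrightarrow$ the surjection $\cM_Z\twoheadrightarrow {\rm IC}_Z\Q^H$ is a Hodge-filtration isomorphism in levels $\leq k$'' does not follow from your Step~2 as written. (Incidentally, the Hodge module computing $\underline{\Omega}_Z^p$ is $\Q_Z^H[d]$, which is the dual of $\cH^r_Z(\cO_X)$ up to a twist, not a subquotient of it.)

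The paper avoids working with ${\mathbf R}\mu_*\Omega_{\widetilde{Z}}^p({\rm log}\,D)$ entirely. It instead imports from \cite{MP2} the characterization: $Z$ is $k$-rational if and only if $Z$ is $k$-Du Bois and the duality morphism $\psi_k\colon\underline{\Omega}_Z^k\to\R\cH om_{\cO_Z}(\underline{\Omega}_Z^{d-k},\omega_Z)$ is an isomorphism. One then interprets $\psi_k$ as $\gr^F_{-k}{\rm DR}$ of a morphism $\psi_Z\colon\Q_Z^H[d]\to\bD(\Q_Z^H[d])(-d)$ of mixed Hodge modules, and observes that $\psi_Z$ factors as $\gamma_Z^{\vee}\circ(\text{iso})\circ\gamma_Z$ through ${\rm IC}_Z\Q^H$. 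This is the correct bridge to the intersection complex. For the passage to the $V$-filtration, your Step~3 is too vague: the argument does not reduce to detecting a root $-(k+r)$, but rather uses the explicit Koszul-type descriptions from \cite{CD} of both $\Q_Z^H[d]$ and $\cH^r_Z(\cO_X)$ in terms of $\gr_V^{\bullet}B_{\bff}$, together with the monodromy weight filtration on $\gr_V^rB_{\bff}$, to compare $F_kW_{n+r}\cH^r_Z(\cO_X)$ with $E_k\cH^r_Z(\cO_X)$ and then close the loop back to $\widetilde{\alpha}(Z)>k+r$.
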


In the case of hypersurfaces, this result was proved independently in \cite[Appendix]{FL2} and \cite{MP2}. The proof we give follows the idea in \cite{FL2},
making also essential use of results from \cite{CD} on the Kashiwara-Malgrange $V$-filtration in the case of higher codimension subvarieties. A key ingredient
in the proof is Saito's theory of mixed Hodge modules \cite{Saito_MHM}. 

The characterization of $k$-Du Bois singularities in \cite{MP1} for local complete intersections can also be formulated in terms of the minimal exponent:
it says that, with the notation in Theorem~\ref{thm1_intro}, $Z$ has $k$-Du Bois singularities if and only if $\widetilde{\alpha}(Z)\geq k+r$. In particular, we obtain the following

\begin{cor}\label{cor2_intro}
If $Z$ is a complex algebraic variety which is locally a complete intersection and if $Z$ has $k$-Du Bois singularities, for some $k\geq 1$,
then $Z$ has $(k-1)$-rational singularities. 
\end{cor}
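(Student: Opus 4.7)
The plan is to deduce this as an immediate consequence of Theorem~\ref{thm1_intro}, combined with the minimal-exponent characterization of $k$-Du Bois singularities from \cite{MP1} that is recalled in the paragraph just before the corollary.

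Both properties in the statement, being $k$-Du Bois and being $k$-rational, are local on $Z$, so I would first reduce to the situation of Theorem~\ref{thm1_intro}. Namely, work in a neighborhood of an arbitrary point $z \in Z$; by the local complete intersection hypothesis, we may embed this neighborhood as a closed subscheme of pure codimension $r$ (possibly depending on the connected component containing $z$) in a smooth irreducible variety $X$, cut out by a regular sequence. Both invariants behave well with respect to this localization, so it suffices to verify the implication in this local model.

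In this setting, the hypothesis that $Z$ has $k$-Du Bois singularities translates, via the characterization in \cite{MP1}, to the numerical statement
$$\widetilde{\alpha}(Z) \geq k + r.$$
Since $k \geq 1$ ensures $k - 1 \geq 0$, the inequality $k + r > (k - 1) + r$ yields
$$\widetilde{\alpha}(Z) \geq k + r > (k-1) + r,$$
so Theorem~\ref{thm1_intro} applied with $k$ replaced by $k - 1$ gives that $Z$ has $(k-1)$-rational singularities in a neighborhood of $z$. Running this argument at every point completes the proof.

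There is essentially no obstacle here beyond Theorem~\ref{thm1_intro} itself: the content of the corollary is the one-step gap between the non-strict threshold for $k$-Du Bois and the strict threshold for $k$-rational on the invariant $\widetilde{\alpha}(Z)$. The role of the assumption $k \geq 1$ is twofold, namely to ensure that the conclusion $(k-1)$-rational is a meaningful notion, and to guarantee that the strict inequality needed for Theorem~\ref{thm1_intro} indeed follows from the non-strict inequality provided by the $k$-Du Bois hypothesis.
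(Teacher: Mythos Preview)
Your proof is correct and follows essentially the same approach as the paper: both deduce the corollary immediately from the numerical characterizations $\widetilde{\alpha}(Z)\geq k+r$ for $k$-Du Bois (from \cite{MP1}) and $\widetilde{\alpha}(Z)>(k-1)+r$ for $(k-1)$-rational (Theorem~\ref{thm1_intro}). Your added localization step to reduce to a closed embedding in a smooth variety is a reasonable elaboration that the paper leaves implicit; note, however, that in your final paragraph the strict inequality $k+r>(k-1)+r$ holds regardless of $k$, so the only genuine role of $k\geq 1$ is to make $(k-1)$-rational a meaningful condition.
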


Another consequence of the numerical characterizations of $k$-rational and $k$-Du Bois local complete intersections
is that $k$-rational implies $k$-Du Bois. However, this result has already been known (it was proved independently in \cite{FL2} and \cite{MP1})
and we use it in our proof of Theorem~\ref{thm1_intro}. 

As a consequence of the result in Theorem~\ref{thm1_intro} and of general properties 
of the minimal exponent, we obtain an upper bound for the dimension of the singular locus. 
We note that if in the following corollary we replace ``$k$-rational" by ``$k$-Du Bois", then it follows from the results in \cite{MP1}
that ${\rm codim}_Z(Z_{\rm sing})\geq 2k+1$.    

\begin{cor}\label{cor2.5_intro}
If $Z$ is a complex algebraic variety which is locally a complete intersection and if $Z$ has $k$-rational singularities, then 
$${\rm codim}_Z(Z_{\rm sing})\geq 2k+2.$$
\end{cor}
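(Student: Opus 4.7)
The plan is to combine Theorem~\ref{thm1_intro} with a general upper bound on the minimal exponent of an isolated local complete intersection singularity in terms of its dimension. By Theorem~\ref{thm1_intro}, the hypothesis that $Z$ has $k$-rational singularities yields the strict inequality $\widetilde{\alpha}(Z)>k+r$; it is this strict inequality that will gain an extra unit over the $k$-Du Bois codimension bound $\codim_Z(Z_{\sing})\ge 2k+1$ from \cite{MP1}, since a direct black-box application of \cite{MP1} only yields that weaker inequality.

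I would first reduce to the case of isolated singularities. Let $W\subseteq Z_{\sing}$ be an irreducible component of maximal dimension, so $\codim_Z(Z_{\sing})=\codim_Z(W)$. At a general point $p\in W$, one chooses a smooth subvariety $X'\subseteq X$ of codimension $\dim W$ meeting $W$ transversely at $p$ (e.g.\ a general linear slice in an affine chart). Then $Z':=Z\cap X'$ is a local complete intersection in $X'$ of pure codimension $r$, with $\dim Z'=\codim_Z(W)$, and whose singular locus is the single point $p$. Because the minimal exponent is preserved under a sufficiently generic smooth transversal restriction (via the compatibility of the Kashiwara--Malgrange $V$-filtration with smooth base change, as used throughout \cite{CDMO,CD}), we still have $\widetilde{\alpha}(Z')=\widetilde{\alpha}(Z)>k+r$.

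The heart of the argument is then the inequality
$$\widetilde{\alpha}(Z')\le \frac{d-1}{2}+r$$
for any isolated local complete intersection singularity $Z'$ of dimension $d$ and codimension $r$ in a smooth ambient variety. Granting this, $k+r<\widetilde{\alpha}(Z')\le (d-1)/2+r$ forces $d>2k+1$, and since $d=\codim_Z(Z_{\sing})$ is an integer we conclude $\codim_Z(Z_{\sing})\ge 2k+2$. For hypersurfaces ($r=1$) this is the classical Saito--Steenbrink bound $\widetilde{\alpha}(f)\le n/2$ for an isolated hypersurface singularity in $\A^n$; in the general lci case it should come from the same Hodge-theoretic mechanism used in \cite{MP1} to obtain the $k$-Du Bois codimension bound, but in its sharp real-valued (rather than integer-rounded) form. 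The main obstacle is precisely to justify the inequality at this real-valued strength; once it is in hand, the strict inequality coming from $k$-rationality, together with the integrality of $\codim_Z(Z_{\sing})$, yields the corollary immediately.
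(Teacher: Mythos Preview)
Your outline is essentially the paper's own argument: reduce to an isolated singularity by cutting with general hyperplanes (the paper invokes \cite[Theorem~1.2]{CDMO} for $\widetilde{\alpha}(Z')=\widetilde{\alpha}(Z)$), then combine $\widetilde{\alpha}(Z')>k+r$ with a sharp upper bound $\widetilde{\alpha}(Z')\le \tfrac{d-1}{2}+r$ and use integrality of $d$. The one point you leave open---the ``real-valued'' inequality $\widetilde{\alpha}(Z')\le \tfrac{d-1}{2}+r$---is exactly what the paper supplies via the tangent-space bound \eqref{eq_bd_min_exp}, namely $\widetilde{\alpha}(Z')\le n'-\tfrac{1}{2}\dim_{\C}T_xZ'$ from \cite[Remark~4.21]{CDMO}; since $x$ is singular on the $d$-dimensional variety $Z'$ one has $\dim_{\C}T_xZ'\ge d+1$, and with $n'=d+r$ this gives precisely $\widetilde{\alpha}(Z')\le \tfrac{d-1}{2}+r$. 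So there is no Hodge-theoretic subtlety here beyond that already-recorded estimate, and once you cite it your proof is complete and identical to the paper's.
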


Let us recall the condition for $k$-Du Bois singularities in terms of the Hodge filtration on local cohomology. For every subvariety $Z$
of a smooth complex algebraic variety $X$ and every $i$, the local cohomology sheaf $\cH^i_Z(\cO_X)$ underlies a mixed Hodge module. As such, it carries a 
Hodge filtration $F_{\bullet}\cH^i_Z(\cO_X)$, an increasing filtration by coherent $\cO_X$-submodules. If $Z$ is a local complete intersection of pure codimension $r$,
then the only nonzero local cohomology sheaf is $\cH^r_Z(\cO_X)$. There is another filtration $E_{\bullet}\cH^r_Z(\cO_X)$ on $\cH^r_Z(\cO_X)$, 
also by coherent $\cO_X$-modules,
given by 
$$E_p\cH^r_Z(\cO_X)=\big\{u\in \cH^r_Z(\cO_X)\mid I_Z^{p+1}u=0\big\}\quad\text{for}\quad p\geq 0,$$
where $I_Z$ is the ideal defining $Z$. It is shown in \cite{MP1} that $F_p\cH^r_Z(\cO_X)\subseteq E_p\cH^r_Z(\cO_X)$ for all $p\geq 0$
and equality for $p=k$ implies equality also for $p<k$. 
One defines the \emph{cohomological level} of the Hodge filtration on $\cH^r_Z(\cO_X)$ by
$$p(Z)=\sup\big\{k\geq 0\mid F_k\cH^r_Z(\cO_X)=E_k\cH^r(\cO_X)\big\},$$
with the convention that $p(Z)=-1$ if there are no such $k$. It is then shown in \cite{MP1} that $Z$ has $k$-Du Bois singularities 
if and only if $p(Z)\geq k$. The condition in terms of the minimal exponent follows from this and the equality 
$p(Z)=\max\big\{\lfloor\widetilde{\alpha}(Z)\rfloor-r, -1\big\}$, proved in \cite{CDMO}.

We characterize $k$-rationality in a similar fashion. Recall that if $X$ is a smooth irreducible $n$-dimensional variety and $Z$ is a closed subvariety of $X$ of pure codimension $r$, then $\cH^r_Z(\cO_X)$ also carries a weight filtration and the lowest weight piece is $W_{n+r}\cH^r_Z(\cO_X)$, which underlies a pure
Hodge module of weight $n+r$ (this $\cD_X$-module is the \emph{intersection cohomology} $\cD_X$-module of Brylinski and Kashiwara \cite{BK}).
We prove the following result,
which in the case of hypersurfaces was proved in \cite{Olano}.

\begin{thm}\label{thm2_intro}
If $Z$ is a local complete intersection subvariety of the smooth, irreducible, $n$-dimensional variety $X$, of pure codimension $r$, then for every nonnegative integer 
$k$, we have $\widetilde{\alpha}(Z)>k+r$ if and only if
$F_kW_{n+r}\cH^r_Z(\cO_X)=E_k\cH^r_Z(\cO_X)$.
\end{thm}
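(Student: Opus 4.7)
By Theorem~\ref{thm1_intro}, the condition $\widetilde{\alpha}(Z)>k+r$ is equivalent to $Z$ being $k$-rational; and by \cite{MP1} together with the equality $p(Z)=\max\{\lfloor\widetilde{\alpha}(Z)\rfloor-r,-1\}$ of \cite{CDMO}, the condition $F_k\cH^r_Z(\cO_X)=E_k\cH^r_Z(\cO_X)$ is equivalent to $\widetilde{\alpha}(Z)\geq k+r$. The ever-present sandwich
$$F_kW_{n+r}\cH^r_Z(\cO_X)\subseteq F_k\cH^r_Z(\cO_X)\subseteq E_k\cH^r_Z(\cO_X)$$
shows that the equality $F_kW_{n+r}\cH^r_Z(\cO_X)=E_k\cH^r_Z(\cO_X)$ is equivalent to the conjunction of (i) $F_k\cH^r_Z(\cO_X)=E_k\cH^r_Z(\cO_X)$ and (ii) $F_k\cH^r_Z(\cO_X)\subseteq W_{n+r}\cH^r_Z(\cO_X)$. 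The theorem thus reduces to the cleaner claim: assuming $Z$ is $k$-Du Bois, one has $\widetilde{\alpha}(Z)>k+r$ if and only if $F_k\cH^r_Z(\cO_X)\subseteq W_{n+r}\cH^r_Z(\cO_X)$.

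My plan is to handle this reduced claim through the Kashiwara--Malgrange $V$-filtration along the graph embedding $i\colon X \hookrightarrow X\times \mathbf{A}^r$ of a local set of generators $f_1,\ldots,f_r$ of $I_Z$. Setting $B_f := i_+\cO_X$, the formalism of \cite{CD} and \cite{CDMO} identifies $\cH^r_Z(\cO_X)$ as a quotient of $B_f$ by pieces of the $V$-filtration, with the Hodge filtration $F_\bullet \cH^r_Z(\cO_X)$ given explicitly in terms of the $F$-filtration of $B_f$ intersected with appropriate $V^{>0}$ pieces, and with $W_{n+r}\cH^r_Z(\cO_X)$ admitting a similar description as the intersection sub-$\cD_X$-module inside $\cH^r_Z(\cO_X)$. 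Under this description, the minimal exponent $\widetilde{\alpha}(Z)$ is read off as the infimum of exponents at which a canonical class constructed from $\delta_f := 1\otimes 1 \in B_f$ appears in the $V$-graded pieces of $B_f$. For the forward direction, assuming $\widetilde{\alpha}(Z)>k+r$, I would show that every class in $E_k\cH^r_Z(\cO_X)=F_k\cH^r_Z(\cO_X)$ arises from $V$-graded contributions already inside $W_{n+r}\cH^r_Z(\cO_X)$. For the reverse direction, I would argue by contrapositive: if $\widetilde{\alpha}(Z)=k+r$, the defining property of the minimal exponent produces a class at exponent exactly $k+r$ whose image in $\cH^r_Z(\cO_X)$ lies in $F_k\cH^r_Z(\cO_X)$ but survives nonzero modulo $W_{n+r}\cH^r_Z(\cO_X)$, violating~(ii).

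The principal obstacle will be lifting Olano's hypersurface-case identification of the Hodge filtration on the intersection $\cD$-module \cite{Olano} to the codimension-$r$ setting. The paper \cite{CD} supplies the multi-variable $V$-filtration and the Hodge filtration on the \emph{ambient} module $\cH^r_Z(\cO_X)$, but isolating the Hodge filtration on the submodule $W_{n+r}\cH^r_Z(\cO_X)$ requires a careful analysis of the strict support decomposition of the underlying pure Hodge module and of which $V$-graded summands of $B_f$ actually contribute to the intersection piece. Matching the resulting multi-variable threshold with the single scalar invariant $\widetilde{\alpha}(Z)$, and ruling out that strictly-positive-weight summands of $\cH^r_Z(\cO_X)$ could carry Hodge-level-$k$ classes in the borderline case $\widetilde{\alpha}(Z)=k+r$, will be the technical heart of the argument.
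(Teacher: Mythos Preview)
Your reduction via the sandwich $F_kW_{n+r}\cH^r_Z(\cO_X)\subseteq F_k\cH^r_Z(\cO_X)\subseteq E_k\cH^r_Z(\cO_X)$ is correct and matches the paper, and the $V$-filtration machinery of \cite{CD} is indeed the right framework. However, two key mechanisms are missing from your plan, and one of them leaves a genuine gap.

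For the forward direction, the paper does not use a strict support decomposition to locate $W_{n+r}\cH^r_Z(\cO_X)$. Rather, \cite{CD} identifies $\gr^W_{i+r}\cH^r_Z(\cO_X)$ with $\gr^W_i\coker\sigma$, where $\sigma=(t_1,\ldots,t_r)\colon(\gr^{r-1}_V B_{\bff})^{\oplus r}\to\gr^r_V B_{\bff}$ and $W_\bullet$ on the right is the \emph{monodromy filtration} for the nilpotent operator $s+r$, shifted by $n$. The implication then goes in one line: $\widetilde{\alpha}(Z)>k+r$ gives $F_{k+r+1}B_{\bff}\subseteq V^{>r-1}B_{\bff}$, hence $(s+r)\cdot F_{k+r}B_{\bff}\subseteq\sum_i t_i\cdot F_{k+r+1}B_{\bff}\subseteq V^{>r}B_{\bff}$, so $F_{k+r}\gr^r_V B_{\bff}\subseteq\ker(s+r)\subseteq W_n\gr^r_V B_{\bff}$. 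Your phrase ``which $V$-graded summands contribute to the intersection piece'' suggests you have not yet isolated this mechanism.

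For the reverse direction, your contrapositive plan has a real gap. To exhibit a class in $F_k\cH^r_Z(\cO_X)$ outside $W_{n+r}$ you would need an element of $F_{k+r}\gr^r_V B_{\bff}$ lying outside $W_n$; but since one only has $\ker(s+r)\subseteq W_n$ and not equality, knowing that $(s+r)u\neq 0$ does not place $u$ outside $W_n$. The paper avoids this entirely by a duality step that your proposal omits: from $F_kW_{n+r}\cH^r_Z(\cO_X)=F_k\cH^r_Z(\cO_X)$ and the isomorphism ${\mathbf D}\big(\cH^r_Z(\cO_X)\big)\simeq\Q_Z^H[d](n)$, one deduces that the surjection $F_{p+r+1}\Q_Z^H[d]\to F_{p+r+1}{\rm IC}_Z\Q^H$ is an isomorphism for $p\leq k$. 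Translating via the \cite{CD} identification of $\Q_Z^H[d]$ with $\ker\delta$, where $\delta=(\partial_{t_1},\ldots,\partial_{t_r})\colon\gr^r_V B_{\bff}\to\big(\gr^{r-1}_V B_{\bff}(-1)\big)^{\oplus r}$, this yields that $\gr^F_{k+r}\ker\delta\to\gr^F_{k+r}\coker\sigma$ is an isomorphism; combined with the $k$-Du~Bois equality $\gr^F_{k+r}\gr^r_V B_{\bff}=\gr^F_{k+r}\coker\sigma$, it forces $\delta$ to vanish on $\gr^F_{k+r}\gr^r_V B_{\bff}$, whence $\partial_{t_i}\cdot F_{k+r}B_{\bff}\subseteq V^{>r-1}B_{\bff}$ and thus $\widetilde{\alpha}(Z)>k+r$. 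Neither the dual object $\Q_Z^H[d]$ nor the second Koszul map $\delta$ appears in your outline, and without them the reverse implication is not accessible by the method you describe.
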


We also show that for singular local complete intersections that have $k$-rational singularities, with $k\geq 1$, some higher cohomology groups of the graded 
pieces of the Du Bois complex do not vanish. This extends the result from \cite[Theorem~1.5]{MOPW} in the case of hypersurfaces.

\begin{thm}\label{thm_nonvanishing}
Let $Z$ be a local complete intersection subvariety of the smooth, irreducible, $n$-dimensional variety $X$. If $Z$ has pure dimension $d$ and $k$-rational singularities, for some $k\geq 1$, 
then 
\[
\cH^k\big(\underline\Omega^{d-k}_Z\big) \simeq \mathcal{E}{xt}^k_{\cO_Z}(\Omega_Z^k,\omega_Z) \simeq  \omega_Z \otimes_{\cO_Z} \mathrm{Sym}^k_{\mathcal O_Z} \mathcal Q,
\]
where $\mathcal Q$ is the cokernel of the canonical map $\mathcal T_X|_Z \to \mathcal N_{Z/X}$. In particular, if $Z$ is singular at $x$, then 
$\cH^k\big(\underline\Omega^{d-k}_Z\big)_x\neq 0$. 
\end{thm}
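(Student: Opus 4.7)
My plan is to establish each of the two stated isomorphisms separately and then deduce the nonvanishing from the explicit formula.

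For the first isomorphism $\cH^k(\underline{\Omega}_Z^{d-k}) \simeq \cExt^k_{\cO_Z}(\Omega_Z^k, \omega_Z)$, I would fix a log resolution $\mu\colon\widetilde{Z}\to Z$ with simple normal crossing exceptional divisor $D$. By the $k$-rational hypothesis, $\Omega_Z^k \simeq \bR\mu_*\Omega_{\widetilde{Z}}^k(\log D)$. Grothendieck duality for the proper morphism $\mu$ (with $\mu^!\omega_Z = \omega_{\widetilde{Z}}$, since $Z$ is Gorenstein as an lci variety) combined with the classical residue pairing $\Omega_{\widetilde{Z}}^k(\log D) \otimes \Omega_{\widetilde{Z}}^{d-k}(\log D)(-D) \to \omega_{\widetilde{Z}}$ yields
\[
\bR\cHom_{\cO_Z}(\Omega_Z^k,\omega_Z) \simeq \bR\mu_*\Omega_{\widetilde{Z}}^{d-k}(\log D)(-D).
\]
Taking $\cH^k$ and identifying the right-hand side with $\cH^k(\underline{\Omega}_Z^{d-k})$ via a Grothendieck--Serre duality for the Du Bois complex on Gorenstein varieties---which, paired with $\Omega_Z^k \simeq \underline{\Omega}_Z^k$ coming from $k$-Du Bois (an immediate consequence of $k$-rational), yields $\bR\cHom(\underline{\Omega}_Z^k,\omega_Z) \simeq \underline{\Omega}_Z^{d-k}$---provides the first isomorphism.

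For the second isomorphism $\cExt^k_{\cO_Z}(\Omega_Z^k, \omega_Z) \simeq \omega_Z \otimes_{\cO_Z} \mathrm{Sym}^k_{\cO_Z} \cQ$, a local, algebraic assertion independent of $k$-rationality, I would dualize the conormal short exact sequence $0 \to \cN^*_{Z/X} \to \Omega_X|_Z \to \Omega_Z \to 0$ (exact because $Z$ is lci) to obtain
\[
0 \to \cT_Z \to \cT_X|_Z \to \cN_{Z/X} \to \cExt^1_{\cO_Z}(\Omega_Z,\cO_Z) \to 0,
\]
identifying $\cQ \simeq \cExt^1_{\cO_Z}(\Omega_Z,\cO_Z)$. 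Choosing locally a regular sequence $f_1,\ldots,f_r$ cutting out $Z$, the cotangent complex is quasi-isomorphic to the two-term complex $[\cN^*_{Z/X}\to\Omega_X|_Z]$ of locally free $\cO_Z$-modules, and a Koszul--symmetric interchange for derived exterior powers (in which symmetric powers of the degree $-1$ term contribute to the top $\cExt$ of the ordinary exterior power) produces $\cExt^k_{\cO_Z}(\Omega_Z^k,\cO_Z) \simeq \mathrm{Sym}^k \cQ$. Tensoring with the invertible dualizing sheaf $\omega_Z$ completes the identification.

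The nonvanishing is then automatic: $\cQ_x = \cExt^1(\Omega_Z,\cO_Z)_x$ is nonzero precisely at singular points of $Z$ (where $\Omega_{Z,x}$ fails to be free), so for $k\geq 1$ and $x$ singular, $\mathrm{Sym}^k \cQ_x \neq 0$, and tensoring with the invertible $\omega_Z$ preserves nonvanishing. I expect the main technical obstacle to lie in the derived exterior power computation producing $\mathrm{Sym}^k\cQ$ in the second step, which genuinely extends the hypersurface calculation of \cite{MOPW} to higher codimension; the Du Bois-type Grothendieck--Serre duality invoked in the first step is delicate but largely standard once properly formulated in the Gorenstein setting, leaning on Saito's framework from \cite{Saito_MHM} together with the results of \cite{MP1}.
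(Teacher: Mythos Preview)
Your first isomorphism has a genuine gap. After correctly obtaining $\R\cHom_{\cO_Z}(\Omega_Z^k,\omega_Z)\simeq \R\mu_*\Omega_{\widetilde Z}^{d-k}(\log D)(-D)$, you try to identify the right-hand side with $\underline\Omega_Z^{d-k}$ by appealing to a ``Grothendieck--Serre duality for the Du Bois complex on Gorenstein varieties'' of the form $\R\cHom(\underline\Omega_Z^k,\omega_Z)\simeq\underline\Omega_Z^{d-k}$. No such general duality exists: the canonical morphism $\psi_p\colon\underline\Omega_Z^p\to\R\cHom_{\cO_Z}(\underline\Omega_Z^{d-p},\omega_Z)$ is defined for every $p$, but it is \emph{not} an isomorphism for arbitrary Gorenstein $Z$. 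What is true generally is that $\R\mu_*\Omega_{\widetilde Z}^{d-k}(\log D)(-D)\simeq\R\cHom_{\cO_Z}(\underline\Omega_Z^k,\omega_Z)$, so your chain of isomorphisms, once you plug in $\Omega_Z^k\simeq\underline\Omega_Z^k$, collapses to a tautology and never reaches $\underline\Omega_Z^{d-k}$.

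The fix is exactly what the paper does, and it is much shorter than your log-resolution route: by Theorem~\ref{chr_k_rat}, the $k$-rationality hypothesis says precisely that $Z$ is $k$-Du Bois \emph{and} that $\psi_k\colon\underline\Omega_Z^k\to\R\cHom_{\cO_Z}(\underline\Omega_Z^{d-k},\omega_Z)$ is an isomorphism. Since $\R\cHom_{\cO_Z}(-,\omega_Z)$ is a duality on $D^b_{\rm coh}(Z)$, one dualizes to get $\underline\Omega_Z^{d-k}\simeq\R\cHom_{\cO_Z}(\Omega_Z^k,\omega_Z)$ directly, and then takes $\cH^k$. No resolution of singularities is needed.

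For the second isomorphism and the nonvanishing, your outline is essentially equivalent to the paper's argument, just phrased more abstractly. The paper makes explicit the locally free $\cO_Z$-resolution of $\Omega_Z^k$ (the length-$k$ Eagon--Northcott-type complex, available because $k$-Du Bois forces $\codim_Z(Z_{\rm sing})\geq k$) and then identifies the cokernel of $\cT_X\vert_Z\otimes\Sym^{k-1}\cN_{Z/X}\to\Sym^k\cN_{Z/X}$ with $\Sym^k\cQ$ via \cite{DE}*{Proposition~A2.2(d)}; your ``Koszul--symmetric interchange for derived exterior powers'' is the same computation in different language. Your nonvanishing argument via $\cQ_x\neq 0\Rightarrow\Sym^k\cQ_x\neq 0$ (Nakayama) is fine and slightly slicker than the paper's, which instead chooses an embedding with $T_xZ=T_xX$ to make the relevant matrix vanish identically at $x$.
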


As observed in \cite{MOPW}, such a result imposes restrictions on varieties with quotient or toroidal singularities. 
Indeed, if $Z$ is a variety with quotient or toroidal singularities, then $\cH^i\big(\underline{\Omega}_Z^p)=0$ for all $p$ and all $i\geq 1$;
for quotient singularities, this follows from \cite[Section~5]{DuBois} and for toroidal singularities, it follows from \cite[Chapter~V.4]{GNPP}. 
On the other hand, it is well-known that such singularities are rational. By combining Theorems~\ref{thm1_intro} and \ref{thm_nonvanishing},
we thus obtain

\begin{cor}
Let $Z$ be a local complete intersection subvariety, of pure codimension $r$, of the smooth, irreducible algebraic variety $X$. If $Z$ is singular,
with quotient or toroidal singularities, then $r<\widetilde{\alpha}(Z)\leq r+1$. 
\end{cor}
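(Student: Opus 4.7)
The strategy is to obtain the two bounds on $\widetilde{\alpha}(Z)$ independently, by directly combining the vanishing statements recalled just before the corollary with Theorems~\ref{thm1_intro} and \ref{thm_nonvanishing}. Both quotient and toroidal singularities are classically known to be rational (via Viehweg in the quotient case, and via the toric model in the toroidal case), so one always has rationality of $Z$ available as a starting point.

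For the lower bound $r < \widetilde{\alpha}(Z)$, I would simply invoke the characterization from \cite{CDMO} that was recalled in the introduction: for a local complete intersection $Z$ of pure codimension $r$, rationality of $Z$ is equivalent to $\widetilde{\alpha}(Z) > r$. Since $Z$ is rational by the classical result, this gives the inequality.

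For the upper bound $\widetilde{\alpha}(Z) \leq r+1$, the plan is to argue by contradiction. Assume $\widetilde{\alpha}(Z) > r+1$. Then Theorem~\ref{thm1_intro} applied with $k=1$ implies that $Z$ has $1$-rational singularities. Since $Z$ is assumed singular, pick a singular point $x \in Z$; applying Theorem~\ref{thm_nonvanishing} with $k=1$ yields
\[
\cH^1\big(\underline{\Omega}_Z^{d-1}\big)_x \neq 0.
\]
This contradicts the vanishings $\cH^i\big(\underline{\Omega}_Z^p\big) = 0$ for all $p$ and all $i \geq 1$ that hold on any variety with quotient or toroidal singularities, as recorded just above the corollary (citing \cite{DuBois} and \cite{GNPP}).

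There is essentially no main obstacle here: once Theorems~\ref{thm1_intro} and \ref{thm_nonvanishing} are in hand, the corollary is a two-step application, one invoking each theorem. The only minor point to check is that the index $d-1$ is nonnegative, which is automatic since a zero-dimensional scheme with quotient or toroidal singularities is just a reduced point, hence smooth, contradicting the assumption that $Z$ is singular.
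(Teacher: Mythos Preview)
Your proposal is correct and follows essentially the same argument as the paper: the lower bound comes from rationality of quotient/toroidal singularities together with the characterization $\widetilde{\alpha}(Z)>r\Leftrightarrow Z$ rational from \cite{CDMO}, and the upper bound comes by contradiction from Theorem~\ref{thm1_intro} (with $k=1$) and the nonvanishing in Theorem~\ref{thm_nonvanishing}, contradicting the Du Bois complex vanishings for quotient/toroidal singularities. Your added remark that $d\geq 1$ is a harmless extra check; in fact Corollary~\ref{cor2.5_intro} already forces $d\geq 4$ once $Z$ is singular and $1$-rational.
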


Our final result concerns the level of generation of the Hodge filtration on 
$\cH^r_Z(\cO_X)$. Recall that if $\cM$ is a $\cD_X$-module endowed with a good filtration, where $\cD_X$ is the sheaf of differential operators on $X$, then we have
$F_1\cD_X\cdot F_p\cM\subseteq F_{p+1}\cM$, with equality for $p\gg 0$ (here $F_{\bullet}\cD_X$ is the order filtration on $\cD_X$). If equality
holds for $p\geq p_0$, we say that the filtration on $\cM$ is generated at level $p_0$. This definition applies, in particular, for the filtered $\cD_X$-module
underlying a mixed Hodge module on $X$.

\begin{thm}\label{thm4_intro}
If $Z$ is a singular, pure codimension $r$, local complete intersection subvariety of the smooth, irreducible, $n$-dimensional variety $X$, then the Hodge filtration 
on $\cH^r_Z(\cO_X)$ is generated at level $n-\lceil\widetilde{\alpha}(Z)\rceil-1$. 
\end{thm}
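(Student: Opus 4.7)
The plan is to extend the hypersurface argument of \cite{MP2} to higher codimension, using the multi-index Kashiwara-Malgrange $V$-filtration developed in \cite{CD} together with the numerical characterizations in Theorems~\ref{thm1_intro} and~\ref{thm2_intro}.

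First I would reduce to a local computation via the graph embedding. Writing $I_Z = (f_1,\ldots,f_r)$ for a regular sequence, form $\iota\colon X \hookrightarrow Y := X\times\A^r$, $\iota(x) = (x, f(x))$, with coordinates $t_1,\ldots,t_r$ on $\A^r$, and set $\cB := \iota_+\cO_X$ with its Saito Hodge filtration. By \cite{CD}, $\cH^r_Z(\cO_X)$ is recovered from $\cB$ via the multi-index $V$-filtration along $X\times\{0\}$, and the Hodge filtration on $\cH^r_Z(\cO_X)$ is induced from that on $\cB$. Under this description, the generation-at-level assertion becomes: for $p \geq n - \lceil\widetilde{\alpha}(Z)\rceil - 1$, every element of the relevant piece of $F_{p+1}\cB$ is congruent modulo $(t_1,\ldots,t_r)\cB$ to an element of $\sum_i \partial_{t_i}\cdot F_p\cB$.

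The core of the argument is the $V$-filtration computation. The characterization of $\widetilde{\alpha}(Z)$ in \cite{CD} translates into bounds on the roots of the $b$-functions associated to the multi-index $V$-filtration, and the condition $p \geq n-\lceil\widetilde{\alpha}(Z)\rceil-1$ corresponds precisely to these roots being nonzero on the graded pieces relevant for passing from Hodge degree $p$ to $p+1$. Combined with the base-case control $F_{k_0}W_{n+r}\cH^r_Z(\cO_X) = E_{k_0}\cH^r_Z(\cO_X)$ for $k_0 = \lceil\widetilde{\alpha}(Z)\rceil - r - 1$ from Theorem~\ref{thm2_intro}, the generation claim then follows by induction on $p$: at each step one solves the relation expressing new elements of $F_{p+1}\cB$ as $\sum_i \partial_{t_i}\cdot F_p\cB$ plus an element of $(t_1,\ldots,t_r)\cB$ using the invertibility of the relevant $b$-function value.

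The main obstacle is the passage from the hypersurface argument, where a single Bernstein-Sato polynomial controls the picture, to higher codimension, where one must work with the Bernstein-Sato ideal. Extracting the scalar threshold $n - \lceil\widetilde{\alpha}(Z)\rceil - 1$ purely in terms of the minimal exponent, rather than the finer data of the Bernstein-Sato ideal, requires the dictionary between the multi-index $V$-filtration and $\widetilde{\alpha}(Z)$ from \cite{CD} and careful tracking of the indexing conventions in Saito's direct-image formulas.
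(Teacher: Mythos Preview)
Your proposal has a genuine gap at the key step. The reformulation of the generation condition as ``every element of the relevant piece of $F_{p+1}\cB$ is congruent modulo $(t_1,\ldots,t_r)\cB$ to an element of $\sum_i\partial_{t_i}\cdot F_p\cB$'' is at best imprecise: $\cH^r_Z(\cO_X)$ is the top cohomology of a Koszul complex on $\gr_V^{\bullet}B_{\bff}$ (Theorem~\ref{thm_CD}), not a naive quotient of $\cB$, and the generation condition concerns the $F_1\cD_X$-action, which is not captured by the $\partial_{t_i}$'s alone. More seriously, you explicitly flag the absence of a single Bernstein--Sato polynomial in higher codimension as ``the main obstacle'' and then do not resolve it; the promised ``invertibility of the relevant $b$-function value'' is exactly what one no longer has when $r>1$, and nothing in \cite{CD} or \cite{CDMO} supplies a substitute functional equation of the required shape. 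The appeal to Theorem~\ref{thm2_intro} as a base case is also misplaced: that result controls the Hodge filtration only up to level $\lceil\widetilde\alpha(Z)\rceil-r-1$, and you offer no mechanism to propagate from there to generation at level $n-\lceil\widetilde\alpha(Z)\rceil-1$.

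The paper sidesteps this obstacle entirely by a different route. Rather than manipulate $\cB$ and $b$-functions directly, it uses the criterion that $F_\bullet\cH^r_Z(\cO_X)$ is generated at level $q$ once $\cH^0\gr^F_{p-n}\DR_X\cH^r_Z(\cO_X)=0$ for all $p>q$, and then applies the duality ${\mathbf D}\big(\cH^r_Z(\cO_X)\big)\simeq\Q^H_Z[d](n)$ together with (\ref{eq_compat_with_duality}) to convert this into the vanishing of $\cE xt^n_{\cO_X}\big(\gr^F_{-p}\DR_X\Q^H_Z[d],\cO_X\big)$. A spectral sequence reduces matters to showing $\cE xt^j_{\cO_X}\big(\gr^F_{j-p}\Q^H_Z[d],\cO_X\big)=0$ for $0\leq j\leq n$ and $p\geq n-\lceil\widetilde\alpha(Z)\rceil$. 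The only substantive input from \cite{CD} is then the description of $\gr^F_\ell\Q^H_Z[d]$ as a direct sum of copies of $\cO_Z$ whenever $\ell<\widetilde\alpha(Z)$ (coming from $F_\ell B_{\bff}\subseteq V^rB_{\bff}$ and $F_\ell V^{>r}B_{\bff}=\sum_i t_iF_\ell B_{\bff}$), after which the Ext vanishing follows from the Koszul resolution of $\cO_Z$. No $b$-function inversion, no induction on $p$, and no appeal to Theorems~\ref{thm1_intro} or~\ref{thm2_intro} is needed.
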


When $r=1$, this is \cite[Theorem~A]{MP0}. 
We also note that it follows from \cite[Theorem~4.2]{MP1} that the filtration on $\cH^r_Z(\cO_X)$ is always generated at level $n-r$, hence the assertion in the above theorem is interesting when $\widetilde{\alpha}(Z)>r-1$. Furthermore, via the equivalence in \emph{loc}.~\emph{cit}., the assertion in Theorem~\ref{thm4_intro}
admits the following interpretation in terms of relative vanishing.

\begin{thm}\label{thm5_intro}
Let $Z$ be a singular,  pure codimension $r$, local complete intersection subvariety of the smooth, irreducible, $n$-dimensional variety $X$. If $f\colon Y\to X$
is a proper morphism that is an isomorphism over $X\smallsetminus Z$, with $Y$ smooth and $E=f^{-1}(Z)_{\rm red}$ a simple normal crossing divisor, then
$$R^{r-1+i}f_*\Omega_Y^{n-i}({\rm log}\,E)=0\quad\text{for}\quad i>n-\lceil\widetilde{\alpha}(Z)\rceil-1.$$
\end{thm}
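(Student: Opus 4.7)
The approach is to recognize Theorem~\ref{thm5_intro} as a direct translation of Theorem~\ref{thm4_intro} via the Hodge-theoretic dictionary established in \cite[Theorem~4.2]{MP1}. That result provides, for any log resolution $f\colon Y\to X$ as in the hypothesis, an equivalence between the level of generation of the Hodge filtration on the local cohomology module $\cH^r_Z(\cO_X)$ and the vanishing of a specific sequence of higher direct images of log-differential forms. Concretely, the equivalence reads: the Hodge filtration is generated at level $p_0$ if and only if
$$R^{r-1+i}f_*\Omega_Y^{n-i}({\rm log}\,E)=0 \quad\text{for every}\quad i>p_0.$$

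First, I would import the statement of this equivalence from \cite[Theorem~4.2]{MP1}; in the hypersurface case $r=1$, this reduces to \cite[Theorem~A]{MP0}. The proof in \emph{loc.~cit.}~proceeds via Saito's machinery: one computes the filtered $\cD_X$-module underlying the mixed Hodge module $f_+\cO_Y(*E)$ through the filtered log de Rham complex on $Y$ (with its order-of-pole filtration), then invokes strictness of the Hodge filtration under the proper direct image $f_+$, together with the identification of $\cH^r_Z(\cO_X)$ with the appropriate cohomology of $f_+\cO_Y(*E)$. The fact that $Z$ is a complete intersection of pure codimension $r$ enters precisely to guarantee that the remaining local cohomology sheaves $\cH^i_Z(\cO_X)$ vanish, so that the graded information of the pushforward concentrates in a single degree. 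With the dictionary in hand, I would plug in the generation level $p_0=n-\lceil\widetilde{\alpha}(Z)\rceil-1$ furnished by Theorem~\ref{thm4_intro} and immediately read off the asserted vanishing for all $i>n-\lceil\widetilde{\alpha}(Z)\rceil-1$.

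The main obstacle, were one to prove this from scratch, would be establishing the equivalence in \cite[Theorem~4.2]{MP1} itself: identifying the order filtration on differential operators with the pole-order filtration on the log de Rham complex at the level of the spectral sequence computing $\mathrm{gr}^F_\bullet f_+\cO_Y(*E)$, and verifying that Saito's strictness degenerates this spectral sequence in just the right way that generation at a given level corresponds to the stated higher direct image vanishings. Since both Theorem~\ref{thm4_intro} and \cite[Theorem~4.2]{MP1} are at our disposal, Theorem~\ref{thm5_intro} is essentially a one-line consequence of combining them, with no codimension-specific input beyond what has already been used to prove Theorem~\ref{thm4_intro}.
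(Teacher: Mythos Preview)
Your proposal is correct and matches the paper's approach exactly: the paper states (in the paragraph preceding Theorem~\ref{thm5_intro}) that the vanishing result is simply the translation of Theorem~\ref{thm4_intro} via the equivalence in \cite[Theorem~4.2]{MP1}, and gives no further argument. Your description of how that equivalence works and why the local complete intersection hypothesis enters is accurate supplementary detail.
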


\medskip

\noindent {\bf Outline of the paper}. In the next section, we review some basic notions and results that we will need for the proofs of our main results.
Theorem~\ref{thm1_intro} and its corollaries, as well as Theorem~\ref{thm2_intro} are proved in Section~\ref{section3}. Theorem~\ref{thm_nonvanishing}
is proved in Section~\ref{section_nonvanishing}, 
while Theorem~\ref{thm4_intro} is proved in Section~\ref{section4}.

\noindent {\bf Acknowledgments}. We would like to thank Sebasti\'{a}n Olano and Mihnea Popa for many helpful discussions. 
We are also indebted to Christian Schnell for some useful suggestions and to Morihiko Saito for his comments on a previous version of this paper.

\section{Background overview}\label{section2}

In this section we recall some definitions and results that we will need.
We work over the field $\C$ of complex numbers. By a variety we mean a reduced scheme of finite type over $\C$, not necessarily irreducible. 
For a variety $Z$, we denote by $Z_{\rm sing}$ the singular locus of $Z$.

\subsection{Mixed Hodge modules}\label{section_MHM} We only give a brief introduction to mixed Hodge modules and refer for proofs and details to \cite{Saito_MHM}.
Let $X$ be a smooth, irreducible, $n$-dimensional variety and let $X^{\rm an}$ be the complex manifold corresponding to $X$. We denote by $\cD_X$ the sheaf of differential operators on $X$. For basic facts about $\cD_X$-modules,
we refer to \cite{HTT}. All the $\cD_X$-modules we will consider will be left $\cD_X$-modules. Since some of the results in the literature are stated for
right $\cD_X$-modules, we recall that there is an equivalence of categories between left and right $\cD_X$-modules  such that if $\cM^r$ is the right
$\cD_X$-module corresponding to the left $\cD_X$-module $\cM$, then we have an isomorphism of $\cO_X$-modules
$$\cM^r\simeq\cM\otimes_{\cO_X}\omega_X.$$ 
When dealing with filtered $\cD_X$-modules, the filtrations on $\cM$ and $\cM^r$ are indexed such that
the above isomorphism maps $F_{p-n}\cM^r$ to $F_p\cM\otimes_{\cO_X}\omega_X$ for all $p\in\Z$. 

All filtrations on $\cD_X$-modules that we will encounter are assumed to be bounded below, good filtrations compatible with the
filtration $F_{\bullet}\cD_X$ on $\cD_X$ by order of differential operators. This means that they are increasing, exhaustive filtrations by $\cO_X$-submodules such that we have
$$F_p\cD_X\cdot F_q\cM\subseteq F_{p+q}\cM\quad\text{for all}\quad p,q\in\Z,$$
and there is $q_0$ such that this inclusion is an equality for all $p\geq 0$ and $q\geq q_0$. In this case we say that the filtration is \emph{generated at level} $q_0$. 

A mixed Hodge module $M=(\cM, F_{\bullet}\cM, {\mathcal P}, \alpha, W_{\bullet}\cM)$ on $X$ consists of several pieces of data: $\cM$ is a $\cD_X$-module on $\cM$ (holonomic and with regular singularities), $F_{\bullet}\cM$ is a good filtration on $\cM$ (the \emph{Hodge filtration}), $W_{\bullet}\cM$ is a finite increasing 
filtration on $\cM$ by $\cD_X$-submodules (the \emph{weight filtration}), and $\cP$
is a perverse sheaf over $\Q$ on $X^{\rm an}$ (sometimes written as ${\rm rat}(M)$), whose complexification is isomorphic via $\alpha$ to the perverse sheaf over $\C$ that corresponds to $\cM$ via the
Riemann-Hilbert correspondence. These data are supposed to satisfy a complicated set of conditions that we do not discuss.  
We refer to $(\cM,F)$ as the filtered $\cD_X$-module underlying $M$ (though,
with an abuse of notation, we sometimes write $F_kM$ and $W_kM$ instead of $F_k\cM$ and $W_k\cM$, respectively).

The Tate twist $M(k)$ of a mixed Hodge module $M$ as above has the same underlying $\cD_X$-module, but the two filtrations are shifted by
$$F_i\cM(k)=F_{i-k}\cM\quad\text{and}\quad W_i\cM(k)=W_{i+2k}\cM\quad\text{for all}\quad i\in\Z.$$

We note that the mixed Hodge modules on $X$ form an Abelian category and every morphism of mixed Hodge modules is a morphism of $\cD_X$-modules, which 
preserves the Hodge and the weight filtration and is strict with respect to both filtrations.
There is a duality functor ${\mathbf D}$ on this category, lifting the usual duality functor on holonomic $\cD_X$-modules. All our Hodge
modules are polarizable, so the choice of a polarization implies that if $M$ as above is \emph{pure of weight} $k$ (that is, ${\rm Gr}^W_i(M)=0$ for $i\neq k$), we have an isomorphism
${\mathbf D}(M)\simeq M(k)$. For a general mixed Hodge module $M$ and for every $k\in\Z$, the graded piece ${\rm Gr}_k^W(M)$, with the induced Hodge filtration, is a pure
Hodge module of weight $k$. 

An important example of a mixed Hodge module (in fact, the only one that is easy to describe explicitly besides the ones with $0$-dimensional support) is $\Q_X^H[n]$, which is a pure Hodge module of weight $n$. 
The underlying $\cD_X$-module is $\cO_X$ and the Hodge filtration is such that ${\rm Gr}^F_i(\cO_X)=0$ for all $i\neq 0$. The corresponding perverse sheaf is $\Q_{X^{\rm an}}[n]$. 
Note that since $\Q_X^H[n]$ has weight $n$, a choice of polarization gives an isomorphism ${\mathbf D}(\Q_X^H[n]\big)\simeq\Q_X^H(n)[n]$. 

Given a mixed Hodge module $M$, with underlying filtered $\cD_X$-module $(\cM,F)$, the Hodge filtration makes the de Rham complex of $\cM$ a filtered complex. The graded pieces
are, in fact, complexes of $\cO_X$-modules. More precisely, ${\rm Gr}_p^F{\rm DR}_X(M)$ is the complex
$$0\to {\rm Gr}_p^F(\cM)\to\Omega_X^1\otimes_{\cO_X}{\rm Gr}_{p+1}^F(\cM)\to \ldots\to \Omega_X^n\otimes_{\cO_X}{\rm Gr}_{p+n}^F(\cM)\to 0,$$
placed in cohomological degrees $-n,\ldots,0$. For example, we have
$${\rm Gr}_{-p}^F{\rm DR}_X\big(\Q_X^H[n]\big)=\Omega_Y^p[n-p].$$
We always think of $\gr^F_p{\rm DR}_X(M)$ as an object in the derived category of coherent sheaves on $X$.
This construction is compatible with push-forward by proper morphisms (see \cite[Section~2.3.7]{Saito_MHP}) and satisfies the following compatibility property with the duality functor
by \cite[Sections 2.4.5 and 2.4.11]{Saito_MHP}: for every $p$, we have a canonical isomorphism
\begin{equation}\label{eq_compat_with_duality}
{\rm Gr}^F_p{\rm DR}_X\big({\mathbf D}(M)\big)\simeq \R\cH om_{\cO_X}\big({\rm Gr}^F_{-p}{\rm DR}_X(M),\omega_X[n]\big).
\end{equation}

For future reference, we include the following lemma, in which we consider arbitrary filtered $\cD_X$-modules:

\begin{lem}\label{general_lemma}
If $f\colon (\cM,F)\to (\cN,F)$
is a morphism of filtered $\cD_X$-modules on $X$ and $k\in\Z$, then the induced morphism
$${\rm Gr}^F_p{\rm DR}_X(f)\colon {\rm Gr}^F_p{\rm DR}_X(M)\to {\rm Gr}^F_p{\rm DR}_X(N)$$ is an isomorphism (in the derived category) for all $p\leq k$
if and only if $F_pf\colon F_pM\to F_pN$ is an isomorphism for all $p\leq k+n$.
\end{lem}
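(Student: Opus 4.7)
The easier implication $(\Leftarrow)$ is immediate. If $F_pf$ is an isomorphism for every $p\le k+n$, then the snake lemma applied to the short exact sequences $0\to F_{p-1}\to F_p\to \gr^F_p\to 0$ for $\cM$ and $\cN$ shows that $\gr^F_p f$ is an isomorphism in the same range. For $q\le k$ and $0\le i\le n$ one has $q+i\le k+n$, so every component $\Omega_X^i\otimes \gr^F_{q+i}f$ of the complex morphism $\gr^F_q\DR_X(f)$ is an isomorphism of $\cO_X$-modules, making the whole morphism a termwise, and hence quasi-, isomorphism.

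For the converse $(\Rightarrow)$, I would split the argument in two: (i) show that $\gr^F_pf$ is an isomorphism for every $p\le k+n$, and then (ii) promote this to $F_pf$ being an isomorphism in the same range. Step (ii) is a straightforward upward induction on $p$: the base case is trivial since the filtrations are bounded below, and the five lemma applied to $0\to F_{p-1}\to F_p\to \gr^F_p\to 0$ closes the induction. The main work lies in Step (i), which I would prove by upward induction on $p$ as well. Fix $p\le k+n$ and assume $\gr^F_{p'}f$ is an isomorphism for all $p'<p$; then apply the hypothesis at $q=p-n\le k$. The complex morphism $\gr^F_{p-n}\DR_X(f)$ is a quasi-isomorphism between complexes in cohomological degrees $-n,\ldots,0$, and its component in degree $-n+i$ is $\Omega_X^i\otimes \gr^F_{p-n+i}f$. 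For $i<n$ the inductive hypothesis makes this component an isomorphism, so the only possibly non-iso component is the top-degree term $\Omega_X^n\otimes \gr^F_p f$.

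The remaining step, where I expect the main technical difficulty to lie, is the elementary homological fact: if $\phi\colon A^\bullet\to B^\bullet$ is a quasi-isomorphism of complexes of $\cO_X$-modules concentrated in cohomological degrees $-n,\ldots,0$ and $\phi^j$ is an isomorphism for every $j<0$, then $\phi^0$ is also an isomorphism. Under these hypotheses both $K:=\ker\phi$ and $C:=\coker\phi$ are concentrated in degree $0$; chasing the long exact sequences of cohomology associated to $0\to K\to A^\bullet\to \im\phi\to 0$ and $0\to \im\phi\to B^\bullet\to C\to 0$, and using that $\phi$ is a quasi-isomorphism, forces $K^0=0$ and $C^0=0$. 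Applied in our setting this yields that $\Omega_X^n\otimes \gr^F_p f$ is an isomorphism, and since $\Omega_X^n$ is a line bundle we conclude that $\gr^F_p f$ is an isomorphism, completing the induction.
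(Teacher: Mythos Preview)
Your proof is correct and follows essentially the same strategy as the paper's: both reduce to the elementary fact that a quasi-isomorphism of bounded complexes which is a termwise isomorphism in all but the top degree is also an isomorphism there. The paper runs a single induction (assuming $F_pf$ is an isomorphism for $p\le k+n-1$ and deducing $F_{k+n}f$ is an isomorphism), while you separate this into first establishing the statement for $\gr^F_pf$ and then promoting to $F_pf$; and the paper proves the key homological fact by three applications of the 5-Lemma, whereas you package it via the kernel and cokernel complexes of $\phi$. These are minor presentational differences rather than genuinely distinct approaches.
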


\begin{proof}
The ``if" assertion follows directly from the definition of the graded de Rham complex. For the converse, arguing by induction, it is enough to show that if $F_pf$ is an isomorphism
for all $p\leq k+n-1$ and $\gr^F_{k}{\rm DR}_X(f)$ is an isomorphism (in the derived category), then $F_{k+n}f$ is an isomorphism
(recall that all our filtrations are assumed to be bounded below).
By hypothesis, we have a morphism of complexes placed in cohomological degrees $-n,\ldots,0$:
\[
\begin{tikzcd}[column sep=tiny]
0\arrow{r} & \gr^F_k(M) \arrow{r}\arrow{d}{\cong} & \cdots \arrow[r, "\alpha"] & \Omega^{n-1}_X\otimes_{\cO_X}\gr_{k+n-1}^F(M) \arrow[r, "\beta"]\arrow{d}{\cong}   & \Omega^n_X\otimes_{\cO_X} \gr^F_{k+n}(M)\arrow{r}\arrow{d} & 0 \\
0\arrow{r} & \gr^F_k(N) \arrow{r} &  \cdots \arrow[r, "\gamma"] &  \Omega^{n-1}_X\otimes_{\cO_X} \gr^F_{k+n-1}(N) \arrow[r, "\delta"] & \Omega^n_X\otimes_{\cO_X} \gr^F_{k+n}(N) \arrow{r} & 0
\end{tikzcd}
\]
such that the vertical maps in degrees $\neq 0$ are isomorphisms and such that all induced maps in cohomology are isomorphisms.
The first condition implies that the map $\coker(\alpha)\to \coker(\gamma)$ is an isomorphism and since 
the map induced for the $(-1)$-cohomology is an isomorphism, it follows from the 5-Lemma that the map ${\rm Im}(\beta)\to {\rm Im}(\delta)$ is an isomorphism. 
Since the map induced for the $0$-cohomology is an isomorphism, it follows from the $5$-Lemma 
that the map ${\rm Gr}^F_{k+n}(f)$ is an isomorphism, and one more application of the 5-Lemma
implies that $F_{k+n}f$ is an isomorphism. 
\end{proof}

One can define mixed Hodge modules also on a singular variety $Z$. In our setting, $Z$ will be embedded in a fixed smooth variety $X$,
and we will \emph{always} view the mixed Hodge modules on $Z$ as mixed Hodge modules on $X$
whose support is contained in $Z$. One can consider the \emph{bounded derived category of mixed Hodge modules} on $Z$, denoted $D^b\big({\rm MHM}(Z)\big)$. 
One can show that this is equivalent to the subcategory of $D^b\big({\rm MHM}(X)\big)$ consisting of objects whose cohomology is supported on $Z$ (see \cite[Corollary~2.23]{Saito_MHM}).
We will denote by $\cH^p$ the standard $p$-th cohomology functor $D^b\big({\rm MHM}(Z)\big)\to {\rm MHM}(Z)$. 
The derived category of mixed Hodge modules satisfies a 6-functor formalism. For example, if $i\colon Z\hookrightarrow X$ is the inclusion, where $X$ is smooth, then the underlying $\cD_X$-module of the mixed Hodge module $\cH^p\big(i^!\Q_X^H[n]\big)$
is the local cohomology sheaf $\cH^p_Z(\cO_X)$ of $\cO_X$ along $Z$. With a slight abuse of notation, from now one we will denote by $\cH^p_Z(\cO_X)$
also the corresponding mixed Hodge module.
For every variety $Z$, if $a_Z\colon Z\to {\rm pt}$ is the morphism to a point, then one defines $\Q_Z^H:=a_Z^*(\Q_{\rm pt}^H)$ in $D^b\big({\rm MHM}(Z)\big)$. If $Z$ is smooth, then this coincides
(up to a cohomological shift) with the object that we have already discussed. In general, however, it is a more complicated object.
If $X$ is a smooth, irreducible $n$-dimensional variety and $i\colon Z\hookrightarrow X$ is a closed embedding, then 
by functoriality we have
a canonical isomorphism $\Q_Z^H\simeq i^*\Q_X^H$, so we have a canonical isomorphism
\begin{equation}\label{eq1_dual_Q}
{\mathbf D}(\Q_Z^H)\simeq i^!\Q_X^H(n)[2n].
\end{equation}

For every $Z$, it is shown in 
\cite[Section~4.5]{Saito_MHM} that $\Q_Z^H$ is of weight $\leq 0$, that is, we have ${\rm Gr}^W_i\big(\cH^j(\Q_Z^H)\big)=0$ for $i>j$. Furthermore, if $Z$ has pure dimension $d$,
then $\cH^i(\Q_Z^H)=0$ for $i>d$ and the \emph{intersection complex Hodge module}
\begin{equation}\label{definition_IC}
{\rm IC}_Z\Q^H:={\rm Gr}^W_d\cH^d(\Q_Z^H)
\end{equation}
can be characterized as the unique object of ${\rm MHM}(Z)$ whose restriction to $U=Z\smallsetminus Z_{\rm sing}$ is $\Q_U^H[d]$ and which has no subobject or quotient supported on
$Z_{\rm sing}$. The corresponding perverse sheaf is the intersection complex of $Z$; if $Z$ is irreducible, then
 this is simple, hence so is ${\rm IC}_Z\Q^H$ and we have $\Q={\rm End}\big({\rm IC}_Z\Q^H\big)$. In general, if $Z$ has $N$ irreducible
 components, we have ${\rm End}\big({\rm IC}_Z\Q^H\big)=\Q^N$ and a morphism $\big({\rm IC}_Z\Q^H\big)\to \big({\rm IC}_Z\Q^H\big)$
 is uniquely determined by its restriction to the smooth locus of $Z$.

Note that by definition of ${\rm IC}_Z\Q^H$, we have a canonical morphism 
\begin{equation}\label{def_gamma}
\gamma_Z\colon\Q_Z^H[d]\to {\rm IC}_Z\Q^H.
\end{equation}

Suppose now that $X$ is a smooth, irreducible $n$-dimensional variety and $i\colon Z\hookrightarrow X$ is a closed embedding. Let $r=n-d$. 
Since ${\rm IC}_Z\Q^H={\rm Gr}^W_d\cH^d(\Q_Z^H)$ and ${\rm Gr}^W_p\cH^d(\Q_Z^H)=0$ for $p>d$, it follows using (\ref{eq1_dual_Q}) that
\begin{equation}\label{eq_dual_IC}
{\mathbf D}({\rm IC}_Z\Q^H)\simeq{\rm Gr}^W_{-d}\big(\cH^{-d}{\mathbf D}(\Q_Z^H)\big)\simeq{\rm Gr}^W_{-d}\cH^{-d}\big(i^!\Q_X^H(n)[2n]\big)={\rm Gr}^W_{n+r}\cH^r_Z(\cO_X)(n)
\end{equation}
and ${\rm Gr}^W_p\cH^r_Z(\cO_X)=0$ for $p<n+r$. We note that this lowest
weight piece of $\cH^r_Z(\cO_X)$ is the \emph{intersection cohomology $\cD$-module} introduced by Brylinski and Kashiwara in \cite{BK};
if $Z$ is irreducible, then it can be characterized as the unique simple $\cD_X$-submodule of $\cH^r_Z(\cO_X)$.

We also consider the shifted dual $\gamma_Z^{\vee}={\mathbf D}(\gamma)(-d)$ of $\gamma_Z$, that can be identified via (\ref{eq1_dual_Q})
to 
\begin{equation}\label{def_gamma_2}
\gamma_Z^{\vee}\colon {\mathbf D}({\rm IC}_Z\Q^H)(-d)\to i^!\Q_X^H[n+r](n-d).
\end{equation}
Note that since ${\rm IC}_Z\Q^H$ is pure of weight $d$, the choice of a polarization gives an isomorphism
${\mathbf D}({\rm IC}_Z\Q^H)(-d)\simeq {\rm IC}_Z\Q^H$. 

We will be especially interested in the case when $Z$ is a local complete intersection subvariety of $X$, of pure codimension $r$.
In this case $\cH^i_Z(\cO_X)=0$ for all $i\neq r$, hence $i^!\Q_X^H[n+r]$ is a mixed Hodge module on $Z$. Duality implies that
also $\Q_Z^H[d]$ is a mixed Hodge module on $Z$, hence $\gamma_Z$ and $\gamma_Z^{\vee}$ are morphisms of mixed Hodge
modules.

\subsection{$V$-filtrations}\label{section_V_filtration} Suppose that $X$ is a smooth, irreducible, $n$-dimensional affine variety and $f_1,\ldots,f_r\in\cO_X(X)=R$ are nonzero regular functions such that the ideal
$(f_1,\ldots,f_r)$ defines the closed subscheme $Z$ of $X$. We consider the graph embedding 
$$\iota\colon X\hookrightarrow W=X\times\A^r,\quad \iota(x)=\big(x,f_1(x),\ldots,f_r(x)\big)$$
and the $\cD$-module pushforward $B_{\bff}=\iota_+\cO_X$ (where $\bff$ stands for $(f_1,\ldots,f_r)$). If $t_1,\ldots,t_r$ denote the standard coordinates on 
$\A^r$, then we can write 
$$B_{\bff}=\bigoplus_{\alpha\in \Z_{\geq 0}^r}R\partial_t^{\alpha}\delta_{\bff},$$
where for $\alpha=(\alpha_1,\ldots,\alpha_r)$, we put $\partial_t^{\alpha}=\partial_{t_1}^{\alpha_1}\cdots \partial_{t_r}^{\alpha_r}$. 
The action of $R$ and of $\partial_{t_i}$ are the obvious ones, while the actions of $D\in {\rm Der}_{\C}(R)$ and of the $t_i$ are given by 
$$D\cdot h\partial_t^{\alpha}\delta_{\bff}=D(h)\partial_t^{\alpha}\delta_{\bff}-\sum_{i=1}^rD(f_i)h\partial_t^{\alpha+e_i}\delta_{\bff}\quad\text{and}\quad
t_i\cdot h\partial_t^{\alpha}\delta_{\bff}=f_ih\partial_t^{\alpha}\delta_{\bff}-\alpha_ih\partial_t^{\alpha-e_i}\delta_{\bff},$$
where $e_1,\ldots,e_r$ is the standard basis of $\Z^d$. In fact, $B_{\bff}$ underlies the pure Hodge module $\iota_*\Q_X^H[n]$, of weight $n$,
with the Hodge filtration given by 
$$F_{p+r}B_{\bff}=\bigoplus_{|\alpha|\leq p}R\partial_t^{\alpha}\delta_{\bff},$$
where for $\alpha=(\alpha_1,\ldots,\alpha_r)$, we put $|\alpha|=\alpha_1+\ldots+\alpha_r$. 

The $V$-filtration on $B_{\bff}$ has been constructed by Kashiwara \cite{Kashiwara}, extending work of Malgrange \cite{Malgrange} in the case $r=1$.
(Actually, in both of these references, the $V$-filtration is indexed by integers. The $\Q$-indexed version that we discuss below was introduced by Saito \cite{Saito_GM}.)
It is a decreasing, exhaustive filtration indexed by rational numbers $(V^{\lambda}B_{\bff})_{\lambda\in\Q}$. It is discrete and left-continuous and 
it is characterized by several properties, the most important of these saying that for every $\lambda\in\Q$
$$t_i\cdot V^{\lambda}B_{\bff}\subseteq V^{\lambda+1}B_{\bff}\quad\text{and}\quad \partial_{t_i}\cdot V^{\lambda}B_{\bff}\subseteq V^{\lambda-1}B_{\bff},$$
and if $s=-\sum_{i=1}^r\partial_{t_i}t_i$, then $s+\lambda$ is nilpotent on ${\rm Gr}^{\lambda}_V(B_{\bff})=V^{\lambda}B_{\bff}/V^{>\lambda}B_{\bff}$,
where $V^{>\lambda}B_{\bff}=\bigcup_{\beta>\lambda}V^{\beta}B_{\bff}$. Note that the Hodge filtration on $B_{\bff}$ induces a Hodge filtration
on each ${\rm Gr}_V^{\lambda}(B_{\bff})$. 

In fact, a $V$-filtration exists on $\iota_+\cM$, whenever $\cM$ underlies 
a mixed Hodge module. In the case $r=1$, the interplay between the Hodge filtration on $\cM$ and $V$-filtrations plays an important role in the definition of mixed Hodge modules. For details about the construction and properties of $V$-filtrations, see \cite{BMS}. 

Let $i\colon Z\hookrightarrow X$ be the inclusion. For $r=1$, the $V$-filtration is the key ingredient for the definition of $i^!(M)$ and $i^*(M)$ when $M$ is a 
mixed Hodge module on $X$. In the case $r>1$, the corresponding description does not follow from the definition of these functors, but it has been recently 
proved in \cite[Theorem~1.2]{CD}. We only state this in the case $M=\Q_X^H[n]$. 

\begin{thm}\label{thm_CD}
With the above notation, the following hold: the Koszul-type complex 
$$0\to \mathrm{Gr}^0_V(B_{\mathbf f})(-r) \xrightarrow{(t_1,t_2,\dots,t_r)} \bigoplus^r_{i=1}\mathrm{Gr}^1_V(B_{\mathbf f})(-r) \to \cdots \to 
\mathrm{Gr}^r_V(B_{\mathbf f})(-r)\to 0,$$
placed in cohomological degrees $0,\ldots,r$ represents $i^!\Q_X^H[n]$ in the derived category of filtered $\cD_X$-modules
and the Koszul-type complex
$$0\to {\rm Gr}_V^r(B_{\bff})\xrightarrow{(\partial_{t_1},\dots,\partial_{t_r})} \bigoplus_{i=1}^r{\rm Gr}_V^{r-1}(B_{\bff})(-1)\to\cdots\to {\rm Gr}_V^0(B_{\bff})(-r)\to 0$$
placed in cohomological degrees $-r,\ldots,0$ represents $i^*\Q_X^H[n]$. 
\end{thm}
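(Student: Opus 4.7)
My plan is to reduce the computation of $i^!\Q_X^H[n]$ and $i^*\Q_X^H[n]$ to $\cD$-module pullbacks of $B_\bff$ along the zero section $i_Y\colon Y = X\times\{0\} \hookrightarrow W$, and then to identify the resulting Koszul complexes through the $V$-filtration. First, I would exploit the cartesian diagram
\[
\begin{tikzcd}
Z \arrow[r,"\bar\iota"] \arrow[d,"i"'] & Y \arrow[d,"i_Y"] \\
X \arrow[r,"\iota"'] & W
\end{tikzcd}
\]
in which $\bar\iota$ identifies $Z$ with its image in $Y\cong X$. Base change for $\cD$-module (and mixed Hodge module) functors along closed embeddings, combined with Kashiwara's equivalence identifying mixed Hodge modules on $X$ with those on $W$ supported on $\iota(X)$, yields canonical isomorphisms
\[
i^!\Q_X^H[n] \simeq i_Y^! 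B_\bff \quad\text{and}\quad i^*\Q_X^H[n] \simeq i_Y^* B_\bff
\]
in $D^b(\mathrm{MHM}(Z))$. Thus it suffices to represent $i_Y^! B_\bff$ and $i_Y^* B_\bff$ by the stated Koszul complexes.

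Second, since $Y\subset W$ is cut out by the regular sequence $t_1,\ldots,t_r$, one has Koszul-type representatives: $i_Y^! M$ is quasi-isomorphic to the Koszul complex on $M$ with differentials given by multiplication by the $t_i$, and $i_Y^* M$ to the one with differentials $\partial_{t_i}$ (both placed in the appropriate cohomological degrees). The defining properties of the $V$-filtration, namely $t_i V^\lambda B_\bff \subseteq V^{\lambda+1} B_\bff$ and $\partial_{t_i} V^\lambda B_\bff \subseteq V^{\lambda-1} B_\bff$ together with the nilpotency of $s+\lambda$ on $\gr_V^\lambda(B_\bff)$, imply that after passing to the associated graded the only nontrivial contributions come from the pieces $\gr_V^0(B_\bff),\ldots,\gr_V^r(B_\bff)$, and that the induced differentials are precisely those displayed in the theorem.

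The main obstacle is the third step: upgrading these identifications to the filtered derived category of mixed Hodge modules. One must show that the Hodge filtration induced from $F_\bullet B_\bff$ on each $\gr_V^\lambda(B_\bff)$, together with the Tate twists $(-r)$, agrees with the Hodge filtration on $i^!\Q_X^H[n]$ and on $i^*\Q_X^H[n]$, and that the Koszul differentials are strict for these filtrations. In the codimension-one case $r=1$ this is essentially built into Saito's definition of a mixed Hodge module, but for $r>1$ the multi-variable $V$-filtration is not simply an iteration of codimension-one filtrations, so one cannot reduce by induction on $r$. To overcome this I would exploit the concrete structure of $B_\bff$ as the cyclic $\cD_W$-module generated by $\delta_\bff$, combined with the characterization of the $V$-filtration via Bernstein-Sato ideals from \cite{BMS}, in order to track the Hodge filtration shift encoded by $F_{p+r}B_\bff = \bigoplus_{|\alpha|\leq p} R\partial_t^\alpha\delta_\bff$ through the Koszul differentials. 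The Tate twists $(-r)$ appearing in both complexes then account precisely for this codimension-$r$ shift in the Hodge filtration of the graph-embedding module.
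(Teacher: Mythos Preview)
The paper does not give its own proof of this statement: it is quoted verbatim as \cite[Theorem~1.2]{CD} and used as a black box throughout. So there is no in-paper argument to compare against; the relevant question is whether your sketch could stand in for the cited result.

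Your first two steps are the correct scaffolding at the level of underlying $\cD$-modules: the base-change reduction to $i_Y^!B_{\bff}$ and $i_Y^*B_{\bff}$ is standard, and the Koszul complexes in $t_i$ (resp.\ $\partial_{t_i}$) do represent these functors. The gap is in the passage from those Koszul complexes on $B_{\bff}$ to the displayed complexes on the graded pieces $\gr_V^0,\ldots,\gr_V^r$, and in the filtered upgrade. Your sentence ``after passing to the associated graded the only nontrivial contributions come from the pieces $\gr_V^0,\ldots,\gr_V^r$'' is doing all the work and is not a consequence of the elementary $V$-filtration axioms you list: one needs, for instance, that the maps $t_i$ and $\partial_{t_i}$ induce appropriate acyclicity outside the window $[0,r]$ and that the resulting spectral sequence degenerates compatibly with $F_\bullet$. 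In codimension one these facts are part of Saito's axiomatics for Hodge modules; in higher codimension they are precisely the content of \cite{CD}, and the paper explicitly notes that ``the corresponding description does not follow from the definition of these functors.'' Your proposed remedy in Step~3, invoking the Bernstein--Sato description of the $V$-filtration from \cite{BMS} and the explicit shape of $F_{p+r}B_{\bff}$, does not by itself yield strictness of the Koszul differentials for the Hodge filtration or identify the induced filtration with the one on $i^!\Q_X^H[n]$; that identification is the theorem, not an input to it. As written, the proposal is a correct outline of the ambient $\cD$-module picture but does not supply the filtered argument that constitutes the actual result.
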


\subsection{The minimal exponent}\label{section_min_exponent} 
We next discuss the minimal exponent for local complete intersection varieties, following \cite{CDMO}. Let $X$ be a smooth,
irreducible, $n$-dimensional variety and $Z$ a (nonempty) closed subscheme of $X$, which is locally a complete intersection of pure codimension $r$. Suppose first 
that $X={\rm Spec}(R)$ is affine
and $Z$ is defined by the ideal generated by $f_1,\ldots,f_r\in R$. The minimal exponent $\widetilde{\alpha}(Z)$ is defined by\footnote{We note that what we
denote by $F_{p+r}B_{\bff}$ here is denoted by $F_pB_{\bff}$  in \cite{CDMO}.}
\begin{equation}\label{eq3_intro_v2}
\widetilde{\alpha}(Z)
= \left\{
\begin{array}{cl}
\sup\{\gamma>0\mid \delta_{\mathbf f}\in V^{\gamma}B_{\bff}\} , & \text{if}\,\,\delta_{\mathbf f}\not\in V^rB_{\mathbf f}; \\[2mm]
\sup\{r-1+q+\gamma\mid F_{q+r}B_{\bff}\subseteq V^{r-1+\gamma}B_{\bff}\} , & \text{if}\,\,\delta_{\mathbf f}\in V^rB_{\mathbf f}.
\end{array}\right.
\end{equation}
In general, we consider a cover $X=U_1\cup\ldots\cup U_N$, where each $U_i$ is an affine open subset as above, and put
$$\widetilde{\alpha}(Z)=\min_{i;Z\cap U_i\neq\emptyset}\widetilde{\alpha}(Z\cap U_i).$$

It follows from \cite[Theorem~1]{BMS} that we always have $\min\big\{\widetilde{\alpha}(Z),r\big\}=\lct(X,Z)$, the \emph{log canonical threshold} of the pair $(X,Z)$. Therefore the minimal exponent is interesting
precisely when $\lct(X,Z)=r$, in which case $Z$ is automatically reduced (see \cite[Remark~4.2]{CDMO}). Moreover, it follows from \cite[Corollary~1.7]{CDMO}
that $Z$ has rational singularities if and only if $\widetilde{\alpha}(Z)>r$. One can also show (see \cite[Remark~4.15]{CDMO}) that $Z$ is smooth if and only if $\widetilde{\alpha}(Z)=\infty$; by definition
of the minimal exponent, this can be rephrased as
\begin{equation}\label{eq_char_V_smooth}
Z\,\,\text{is smooth}\quad\text{if and only if}\quad V^rB_{\bff}=B_{\bff}.
\end{equation}
In fact,
if $x\in Z$ is a singular point, then we have the following more precise bound (see \cite[Remark~4.21]{CDMO}):
\begin{equation}\label{eq_bd_min_exp}
\widetilde{\alpha}(Z)\leq n-\tfrac{1}{2}\dim_{\C}T_xZ.
\end{equation}
The minimal exponent $\widetilde{\alpha}(Z)$ depends on the ambient variety $X$, but in a predictable way: the difference $\widetilde{\alpha}(Z)-\dim(X)$
only depends on $Z$ (see \cite[Proposition~4.14]{CDMO}). 

When $r=1$, the minimal exponent was defined by Saito \cite{Saito_microlocal} as the negative of the largest root of the \emph{reduced Bernstein-Sato polynomial}
$\widetilde{b}_Z(s)$. For the fact that this agrees with the above definition, see for example \cite[Lemma~5.3 and Corollary~C]{MP5}.

Recall now that the $\cD_X$-module $\cH^r_Z(\cO_X)$ underlies a mixed Hodge module on $X$, namely $\cH^r\big(i^!\Q_X^H[n]\big)$, where
$i\colon Z\hookrightarrow X$ is the inclusion. We thus have a canonical filtration on $\cH^r_Z(\cO_X)$, the Hodge filtration $\big(F_p\cH^r_Z(\cO_X)\big)_{p\geq 0}$.
We have a second filtration, the \emph{order} filtration $\big(E_p\cH^r_Z(\cO_X)\big)_{p\geq 0}$, given by
$$E_p\cH^r_Z(\cO_X)=\big\{u\in\cH^r_Z(\cO_X)\mid I_Z^{p+1}u=0\big\}={\rm Im}\big({\mathcal Ext}^r_{\cO_X}(\cO_X/I_Z^{p+1},\cO_X)\hookrightarrow
\cH^r_Z(\cO_X)\big),$$
where $I_Z$ is the ideal defining $Z$ in $X$ (see \cite[Proposition~3.11]{MP1}). 
 It is a general fact that $F_p\cH^r_Z(\cO_X)\subseteq E_p\cH^r_Z(\cO_X)$ for all $p\geq 0$
(see \cite[Proposition~3.4]{MP1}) and the following result shows that the minimal exponent governs how far these two filtrations agree
(see \cite[Theorem~1.3]{CDMO}):

\begin{thm}\label{thm_local_coho}
If $X$ is a smooth, irreducible variety and $Z$ is a local complete intersection subvariety of pure codimension $r$ in $X$, then for a nonnegative integer $k$, we have
$F_p\cH^r_Z(\cO_X)=E_p\cH^r_Z(\cO_X)$ for $0\leq p\leq k$ if and only if $\widetilde{\alpha}(Z)\geq r+k$. 
\end{thm}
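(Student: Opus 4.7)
The plan is to translate both filtrations on $\cH^r_Z(\cO_X)$ into filtration conditions on $B_{\bff}$ via the Koszul-type resolution of $i^!\Q_X^H[n]$ from Theorem~\ref{thm_CD}. Working locally, assume $X = \Spec(R)$ is affine and $Z$ is defined by a regular sequence $f_1, \ldots, f_r$; the boundary case $\delta_{\bff} \notin V^r B_{\bff}$ (in which $\widetilde{\alpha}(Z) < r$) is handled separately by directly verifying $F_0 \neq E_0$. Henceforth assume $\delta_{\bff} \in V^r B_{\bff}$, so we are in the second case of the definition of $\widetilde{\alpha}(Z)$. A preliminary reduction, using the propagation $\partial_{t_i} V^{\lambda}B_{\bff} \subseteq V^{\lambda-1}B_{\bff}$ combined with the equality $F_{p+r}B_{\bff} = \bigoplus_{|\alpha| \leq p} R \partial_t^{\alpha}\delta_{\bff}$, is that
\[
\widetilde{\alpha}(Z) \geq r+k \iff F_{k+r}B_{\bff} \subseteq V^r B_{\bff}.
\]

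By Theorem~\ref{thm_CD}, $\cH^r_Z(\cO_X)$ is the $r$-th cohomology of the Koszul-type resolution of $i^!\Q_X^H[n]$. Taking the Tate twist $(-r)$ into account, this identifies $\cH^r_Z(\cO_X)$ with the cokernel of the Koszul map $(t_1, \ldots, t_r) \colon \bigoplus_{i=1}^r \gr^{r-1}_V B_{\bff} \to \gr^r_V B_{\bff}$, with Hodge filtration appropriately shifted, whence
\[
F_p\cH^r_Z(\cO_X) = \text{image of } \bigl(F_{p+r}B_{\bff} \cap V^r B_{\bff}\bigr) \text{ in } \cH^r_Z(\cO_X).
\]
On the other hand, using the Cech realization of $\cH^r_Z(\cO_X)$ and the correspondence $\partial_t^{\alpha}\delta_{\bff} \leftrightarrow \pm\alpha!/f^{\alpha+\mathbf{1}}$, one checks that $E_p\cH^r_Z(\cO_X)$ is the $\cO_X$-submodule generated by the classes of $\partial_t^{\alpha}\delta_{\bff}$ for $|\alpha| \leq p$, i.e., by the classes of the $R$-generators of $F_{p+r}B_{\bff}$.

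The forward direction is now immediate: if $F_{k+r}B_{\bff} \subseteq V^r B_{\bff}$, then also $F_{p+r}B_{\bff} \subseteq V^r B_{\bff}$ for every $p \leq k$, so every generator $\partial_t^{\alpha}\delta_{\bff}$ of $E_p$ (with $|\alpha| \leq p$) already lies in $F_{p+r}B_{\bff} \cap V^r B_{\bff}$, yielding $F_p = E_p$. For the converse, I proceed by induction on $p$: assuming $F_{p-1+r}B_{\bff} \subseteq V^r B_{\bff}$ (vacuous for $p = 0$) and the equality $F_p = E_p$, the goal is to conclude $F_{p+r}B_{\bff} \subseteq V^r B_{\bff}$. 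For each $|\alpha| = p$, the class of $\partial_t^{\alpha}\delta_{\bff}$ in $\cH^r_Z(\cO_X)$ lies in $E_p = F_p$, hence equals the class of some $y_{\alpha} \in F_{p+r}B_{\bff} \cap V^r B_{\bff}$; the difference $\partial_t^{\alpha}\delta_{\bff} - y_{\alpha} \in F_{p+r}B_{\bff}$ then has trivial class in $\cH^r_Z(\cO_X)$. The main technical obstacle is to extract from this relation, using the uniqueness of the $V$-filtration together with the inductive control on lower Hodge degrees, that $\partial_t^{\alpha}\delta_{\bff}$ itself lies in $V^r B_{\bff}$; this requires a careful comparison of the Cech description and the Koszul description of $\cH^r_Z(\cO_X)$.
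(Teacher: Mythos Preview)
The paper does not actually prove Theorem~\ref{thm_local_coho}; it is stated in Section~\ref{section_min_exponent} with a direct citation to \cite[Theorem~1.3]{CDMO}, so there is no argument in the present paper to compare your proposal against.

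As for the proposal itself: the reduction $\widetilde{\alpha}(Z)\geq k+r\iff F_{k+r}B_{\bff}\subseteq V^rB_{\bff}$ and the forward implication are essentially correct. The converse, however, is genuinely incomplete, and the ``main technical obstacle'' you flag is not a routine detail but the real content of the result. You know only that the \v{C}ech class of $\partial_t^{\alpha}\delta_{\bff}$ agrees with the Koszul class of some $y_{\alpha}\in F_{p+r}B_{\bff}\cap V^rB_{\bff}$. But to compare these you must pass through the identification $\coker\sigma\simeq\cH^r_Z(\cO_X)$, and an element of $B_{\bff}$ that is not already in $V^rB_{\bff}$ has no class in $\coker\sigma$ at all; so the subtraction $\partial_t^{\alpha}\delta_{\bff}-y_{\alpha}$ and the claim that it ``has trivial class'' are not well posed on the Koszul side. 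If instead you work through a surjection $B_{\bff}\twoheadrightarrow\cH^r_Z(\cO_X)$ compatible with both descriptions, the vanishing of that class only tells you the difference lies in the kernel of the surjection, which imposes no $V$-filtration constraint on $\partial_t^{\alpha}\delta_{\bff}$: in a representation $\partial_t^{\alpha}\delta_{\bff}-y_{\alpha}=\sum_i t_ib_i$ the $b_i$ are uncontrolled by either the Hodge or the $V$-filtration. Neither uniqueness of the $V$-filtration nor the inductive hypothesis on lower Hodge pieces supplies that control. The argument in \cite{CDMO} uses sharper compatibilities between the Hodge and $V$-filtrations (from \cite{CD}) that your sketch does not invoke.
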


\subsection{$k$-Du Bois singularities}\label{section_k_DuBois}
To a variety $Z$, Du Bois associated in \cite{DuBois} a complex $\underline{\Omega}_Z^{\bullet}$, known now as the 
\emph{Du Bois complex} of $Z$. This is a filtered complex that agrees with the de Rham complex $\Omega_Z^{\bullet}$, with the ``stupid" filtration, when $Z$ is smooth. 
This allows extending to singular varieties some important cohomological properties of the de Rham complex of smooth varieties, see \cite[Chapter~7.3]{PetersSteenbrink}
for an introduction to this topic. 

We are interested in the shifted truncations $\underline{\Omega}_Z^p:={\rm Gr}_F^p(\underline{\Omega}_Z^{\bullet})[p]$, which are objects in the bounded derived category
$D^b_{\rm coh}(Z)$ of coherent sheaves on $Z$. 
For every $p$, there is a canonical morphism $\Omega_Z^p\to \underline{\Omega}_Z^p$ that is an isomorphism over the smooth locus of $Z$. Following 
\cite{Saito_et_al}, we say that $Z$ has $k$-Du Bois singularities\footnote{Strictly speaking, one should say ``has at most $k$-Du Bois singularities", since we do not require $Z$ to be singular. However,
we trust that the simplified formulation will not lead to confusion.}, for a nonnegative integer $k$, if these morphisms are isomorphisms for all $0\leq p\leq k$. 
Note that for $k=0$, we recover the familiar notion of \emph{Du Bois singularities}.

As we have mentioned in the Introduction, it was shown in \cite[Theorem~F]{MP1} that if $X$ is a smooth, irreducible variety and $Z$ is a local complete intersection
subvariety of $X$, of pure codimension $r$, then $Z$ has $k$-Du Bois singularities if and only if $F_p\cH^r_Z(\cO_X)=E_p\cH^r_Z(\cO_X)$ for $p\leq k$. In terms of minimal exponents,
this condition can be rephrased as $\widetilde{\alpha}(Z)\geq r+k$. The proof of this result in \emph{loc}. \emph{cit}. extends the argument in the case of hypersurfaces, 
for which the two implications had previously been proved in 
\cite{MOPW} and \cite{Saito_et_al}.

\begin{rmk}\label{sing_locus_Du_Bois}
If $Z$ is a local complete intersection variety with $k$-Du Bois singularities, then
${\rm codim}_Z(Z_{\rm sing})\geq 2k+1$. Indeed, this is a local statement, hence we may assume that $Z$ has pure dimension (we use the fact that 
$Z$ is Cohen-Macaulay) and that it is a closed subvariety of the smooth irreducible variety $X$. In this case the assertion follows by combining
\cite[Corollary~3.40 and Theorem~F]{MP1}. 
\end{rmk}

The connection between the Du Bois complex and mixed Hodge modules is provided by the following result of Saito. If $Z$ is a closed subvariety
of the smooth, irreducible, $n$-dimensional variety $X$ and $i\colon Z\hookrightarrow X$ is the inclusion, then it is a consequence of 
\cite[Theorem~4.2]{Saito-HC} that for every $p$, we have an isomorphism
\begin{equation}\label{eq_Saito_DuBois}
\underline{\Omega}_Z^p[-p]\simeq {\rm Gr}^F_{-p}{\rm DR}_X(\Q_Z^H)
\end{equation}
in $D^b_{\rm coh}(X)$.
In light of (\ref{eq_compat_with_duality}) and (\ref{eq1_dual_Q}), this is equivalent to
\begin{equation}\label{eq2_Saito_DuBois}
\underline{\Omega}_Z^p[-p]\simeq \R\cH om_{\cO_X}\big({\rm Gr}^F_{p-n}{\rm DR}_Xi^!\Q_X^H[n],\omega_X\big).
\end{equation}
For an easy proof of this isomorphism, see \cite[Proposition~5.5]{MP1}.

\subsection{$k$-rational singularities}\label{section_k_rational}
Given a variety $Z$, by a \emph{strong log resolution} of $Z$ we mean a proper morphism 
$\mu\colon\widetilde{Z}\to Z$ that is an isomorphism over $Z\smallsetminus Z_{\rm sing}$, such that $\widetilde{Z}$ is smooth and
$E=\mu^{-1}(Z_{\rm sing})$ is a simple normal crossing divisor. For a nonnegative integer $k$, following \cite{FL1}, we say that $Z$ has 
\emph{$k$-rational} singularities if the canonical morphism
\begin{equation}\label{eq_def_k_rat}
\Omega_Z^p\to \R\mu_*\Omega^p_{\widetilde{Z}}({\rm log}\,E)
\end{equation}
is an isomorphism for all $p\leq k$. This is easily seen to be independent of the log resolution (see for example \cite[Lemma~1.6]{MP2}). 
Note that for $k=0$ we recover the classical notion of rational singularities. This condition implies that $Z$ is normal, hence in particular,
every connected component of $Z$ is irreducible. 
The notion of $k$-rational singularities has been extensively studied in \cite{FL1}, \cite{FL2}, \cite{FL3}, \cite{MP2}. 

For our purpose it will be convenient to consider a different description of $k$-rational singularities. Recall from
\cite[Section~6]{MP2} that for every  variety $Z$ of pure dimension $d$ and every nonnegative integer $k$, we have a canonical morphism 
\begin{equation}\label{eq_can1}
\psi_k\colon \underline{\Omega}_Z^k\to \R\cH om_{\cO_Z}\big(\underline{\Omega}_Z^{d-k},\omega_Z^{\bullet}[-d]\big),
\end{equation}
where $\omega_Z^{\bullet}$ is the dualizing complex of $Z$. This is defined as follows: suppose that $\mu\colon Y\to Z$ 
is an arbitrary resolution of singularities (we only require that $Y$ is smooth and $\mu$ is proper and an isomorphism
over a dense open subset of $Z$). By functoriality of the Du Bois complex, for every nonnegative integer $k$, we have a canonical morphism 
$\alpha_k\colon\underline{\Omega}_Z^k\to \R\mu_*\Omega_Y^k$. On the other hand, on $Y$ we have a canonical isomorphism
$$\Omega_Y^k\overset{\simeq}\longrightarrow \R\cH om_{\cO_Y}\big(\Omega_Y^{d-k},\omega_Y^{\bullet}[-d]\big).$$
By pushing this forward and using Grothendieck duality for $\mu$, we obtain an isomorphism $\beta_k$ as the composition
$$\R\mu_*\Omega_Y^k\overset{\simeq}\longrightarrow \R\mu_*\R\cH om_{\cO_Y}\big(\Omega_Y^{d-k},\omega_Y^{\bullet}[-d]\big)\overset{\simeq}\longrightarrow
\R\cH om_{\cO_Z}\big(\R\mu_*\Omega_Y^{d-k},\omega_Z^{\bullet}[-d]\big).$$
The morphism $\psi_k$ is obtained as the composition $\alpha_{d-k}^{\vee}\circ \beta_k\circ\alpha_k$, where we put
$\alpha_{d-k}^{\vee}=\R\cH om_{\cO_Z}\big(\alpha_{d-k},\omega_Z^{\bullet}[-d]\big)$. 
It is shown in \cite[Proposition~6.1]{MP2} that this definition does not depend on the choice of resolution of singularities.

With this notation, we have the following characterization of $k$-rational singularities in the local complete intersection case,
see \cite{MP2} (in \emph{loc}. \emph{cit}. one assumes that $Z$ is irreducible, but the argument works in general):

\begin{thm}\label{chr_k_rat}
If $Z$ is a local complete intersection variety of pure dimension $d$ and $k$ is a nonnegative integer, then $Z$ has $k$-rational singularities
if and only if $Z$ has $k$-Du Bois singularities and the morphism $\psi_k\colon \underline{\Omega}_Z^k\to \R\cH om_{\cO_Z}\big(\underline{\Omega}_Z^{d-k},\omega_Z\big)$ is an isomorphism. 
\end{thm}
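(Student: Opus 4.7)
The plan is to fix a strong log resolution $\mu\colon\widetilde{Z}\to Z$ with $E=\mu^{-1}(Z_{\mathrm{sing}})_{\mathrm{red}}$ a simple normal crossing divisor, use $\mu$ also as the resolution in the definition of $\psi_k$, and decompose $\psi_k$ as $\alpha_{d-k}^{\vee}\circ\beta_k\circ\alpha_k$. Since $Z$ is LCI of pure dimension $d$, it is Cohen--Macaulay and $\omega_Z^{\bullet}[-d]\simeq\omega_Z$. The middle map $\beta_k$ is always an isomorphism (Grothendieck duality on $\widetilde{Z}$ together with the self-duality $\Omega_{\widetilde{Z}}^k\simeq\R\cH om_{\cO_{\widetilde{Z}}}(\Omega_{\widetilde{Z}}^{d-k},\omega_{\widetilde{Z}})$), so the task reduces to showing that $k$-rational is equivalent to $k$-Du Bois together with both $\alpha_k$ and $\alpha_{d-k}$ being isomorphisms. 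The key auxiliary identification, also from Grothendieck duality and the perfect pairing $\Omega_{\widetilde{Z}}^p(\log E)\otimes \Omega_{\widetilde{Z}}^{d-p}(\log E)(-E)\to \omega_{\widetilde{Z}}$, is $\R\mu_*\Omega_{\widetilde{Z}}^p(\log E)\simeq \R\cH om_{\cO_Z}\bigl(\R\mu_*\Omega_{\widetilde{Z}}^{d-p}(\log E)(-E),\omega_Z\bigr)$.

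For the implication ``$k$-rational $\Rightarrow$ ($k$-Du Bois and $\psi_k$ iso)'', assume $Z$ is $k$-rational. Then $Z$ is $k$-Du Bois (recorded in the paper as already known), so $\underline{\Omega}_Z^p=\Omega_Z^p$ and $\Omega_Z^p\xrightarrow{\sim}\R\mu_*\Omega_{\widetilde{Z}}^p(\log E)$ for $p\le k$. I would compare $\R\mu_*\Omega_{\widetilde{Z}}^p$ with $\R\mu_*\Omega_{\widetilde{Z}}^p(\log E)$ via the log residue short exact sequences, using the $k$-Du Bois hypothesis to trivialize the log contributions in the relevant degrees, and deduce that $\alpha_k$ is an isomorphism. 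For $\alpha_{d-k}$, I would dualize: the $k$-rational isomorphism for $p=k$ dualizes to $\R\mu_*\Omega_{\widetilde{Z}}^{d-k}(\log E)(-E)\xrightarrow{\sim}\R\cH om_{\cO_Z}(\Omega_Z^k,\omega_Z)$, and a second residue comparison identifies the source with $\R\mu_*\Omega_{\widetilde{Z}}^{d-k}$, yielding $\alpha_{d-k}$ iso.

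Conversely, assume $Z$ is $k$-Du Bois and $\psi_k$ is an isomorphism; by the reduction above, both $\alpha_k$ and $\alpha_{d-k}$ are isomorphisms. I would run the residue comparison in reverse: $\alpha_k$ iso plus $k$-Du Bois gives $\Omega_Z^k=\underline{\Omega}_Z^k\xrightarrow{\sim}\R\mu_*\Omega_{\widetilde{Z}}^k$, and an induction on $p\le k$ using the short exact sequences of log forms together with $\alpha_{d-k}$ iso (which controls the ``other'' end via Grothendieck duality) should promote this to $\Omega_Z^p\xrightarrow{\sim}\R\mu_*\Omega_{\widetilde{Z}}^p(\log E)$ for $p\le k$, which is the definition of $k$-rational. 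The main obstacle is the asymmetry between the indices $k$ and $d-k$: when $d-k>k$, the $k$-Du Bois hypothesis does not directly control $\underline{\Omega}_Z^{d-k}$, and bridging this gap --- both in producing $\alpha_{d-k}$ iso in the forward direction and in exploiting it in the converse --- is precisely the role played by the $\psi_k$-iso condition.
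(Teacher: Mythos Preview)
The paper does not prove Theorem~\ref{chr_k_rat}; it is quoted from \cite{MP2} (with the remark that the irreducibility hypothesis there can be dropped). So there is no proof in this paper against which to compare your proposal.

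That said, your outline has a genuine gap. In the converse direction you write: ``assume $Z$ is $k$-Du Bois and $\psi_k$ is an isomorphism; by the reduction above, both $\alpha_k$ and $\alpha_{d-k}$ are isomorphisms.'' But no such reduction was established. Knowing that $\beta_k$ is an isomorphism tells you only that the composite $\alpha_{d-k}^{\vee}\circ\alpha_k$ is an isomorphism in the derived category; this does not force the two factors to be isomorphisms individually. (In an abelian category one could at least say the first map is split mono and the second split epi, but in a triangulated category even that fails in general.) You implicitly acknowledge this in your final sentence, where you call the $k$ versus $d-k$ asymmetry ``the main obstacle'' and say that bridging it ``is precisely the role played by the $\psi_k$-iso condition'' --- but you never actually bridge it. The forward direction is also sketchy: the phrases ``using the $k$-Du Bois hypothesis to trivialize the log contributions'' and ``a second residue comparison identifies the source with $\R\mu_*\Omega_{\widetilde{Z}}^{d-k}$'' hide exactly the nontrivial steps, and the residue sequence relates $\Omega_{\widetilde{Z}}^p(\log E)$ to $\Omega_{\widetilde{Z}}^p$ via sheaves supported on $E$ whose pushforwards you have not controlled.

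If you want to reconstruct the argument, the ingredients used in \cite{MP2} are the already-known implication ``$k$-rational $\Rightarrow$ $k$-Du Bois'' (proved independently in \cite{FL2} and \cite{MP1}, and invoked in the present paper as well) together with an identification of $\R\mu_*\Omega_{\widetilde{Z}}^p(\log E)(-E)$ with a suitable dual of $\underline{\Omega}_Z^{d-p}$, which is what makes the duality between the $k$ and $d-k$ indices work.
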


It will be important for us to use an interpretation of the morphism $\psi_k$ from \cite[Appendix]{FL2}, as the graded de Rham of a
morphism of mixed Hodge modules. Let $Z$ be a variety of pure dimension $d$ and $\mu\colon Y\to Z$ any resolution of singularities,
with $\mu$ a projective morphism.
Note that by functoriality we have a canonical morphism of mixed Hodge modules $\alpha\colon \Q_Z^H[d]\to\mu_*\Q^H_Y[d]$. On the other hand, 
since $\Q_Y^H[d]$ is pure of weight $d$, 
on $Y$ we have a canonical
isomorphism $\Q_Y^H[d]\to {\mathbf D}\big(\Q_Y^H[d]\big)(-d)$, which after pushing forward to $Z$ and using the compatibility of pushforward with duality,
gives an isomorphism
$$\beta\colon \mu_*\Q_Y^H[d]\overset{\simeq}\longrightarrow \mu_*{\mathbf D}\big(\Q_Y^H[d]\big)(-d)\overset{\simeq}\longrightarrow
{\mathbf D}\big(\mu_*\Q_Y^H[d]\big)(-d).$$
We then obtain a morphism $\psi_Z$ in the derived category of mixed Hodge modules on $Z$ as the following composition
\begin{equation}\label{eq_psi}
\Q_Z^H[d]\overset{\alpha}\longrightarrow \mu_*\Q_Y^H[d]\overset{\beta}\longrightarrow {\mathbf D}\big(\mu_*\Q_Y^H[d]\big)(-d)
\overset{\alpha^{\vee}}\longrightarrow
{\mathbf D}\big(\Q_Z^H[d]\big)(-d),
\end{equation}
where $\alpha^{\vee}={\mathbf D}(\alpha)(-d)$.

If $Z$ is a closed subvariety of the smooth, irreducible variety $X$ and $i\colon Z\hookrightarrow X$ is the inclusion, then using the compatibility of 
the graded de Rham complex with direct image and duality, we see that for every $k\in\Z$ we have
$$\alpha_k={\rm Gr}^F_{-k}{\rm DR}_X(\alpha)[k-d], \,\,\beta_k={\rm Gr}^F_{-k}{\rm DR}_X(\beta)[k-d],\,\,\alpha_{d-k}^{\vee}=
{\rm Gr}^F_{-k}{\rm DR}_X(\alpha^{\vee})[k-d],$$
hence $\psi_k={\rm Gr}^F_{-k}{\rm DR}_X(\psi_Z)[k-d]$. 

\begin{rmk}\label{dual_psi}
It follows from the definition of $\psi_Z$ that $\psi_Z^{\vee}:={\mathbf D}(\psi_Z)(-d)$ can be identified with $\psi_Z$. 
\end{rmk}

\begin{rmk}\label{rmk_two_morphisms}
Suppose now that $X$ is a smooth, irreducible, $n$-dimensional variety and $i\colon Z\hookrightarrow X$ is a closed embedding,
where $Z$ is a local complete intersection subvariety of $X$, of pure codimension $r$. Let $d=n-r$. 
As we have already mentioned, in this case, the morphisms
$$\Q^H_Z[d]\overset{\gamma_Z}\longrightarrow \mathrm{IC}_Z\Q^H\quad\text{and}\quad {\mathbf D}\big(\mathrm{IC}_Z\Q^H\big)(-d)
\overset{\gamma_Z^{\vee}}\longrightarrow {\mathbf D}\big(\Q^H_Z[d]\big)(-d)$$
are morphisms of mixed Hodge modules, with $\gamma_Z$ surjective and $\gamma_Z^{\vee}$ injective. 

Since $\psi_Z$ is a morphism between two mixed Hodge modules on $Z$, we obtain the same morphism
if we take $\cH^0(-)$; in other words, $\psi_Z$ agrees with the composition
\begin{equation}\label{eq_fact1}
\Q_Z^H[d]\to \cH^0\big(\mu_*\Q_Y^H[d]\big)\to {\mathbf D}\big(\cH^0(\mu_*\Q_Y^H[d])\big)(-d)\to {\mathbf D}\big(\Q_Z^H[d]\big)(-d).
\end{equation}
On the other hand, since $\Q^H_Y[d]$ is pure of weight $d$, so is $\cH^0\big(\mu_*\Q_Y^H[d]\big)$, see \cite[(4.5.2)]{Saito_MHM}. 
Since $\gr_W^i\big(\Q_Z^H[d]\big)=0$ for $i>d$, it follows that
 the composition in (\ref{eq_fact1}) further factors as
\begin{equation}\label{eq_fact2}
\Q_Z^H[d]\overset{\gamma_Z}\longrightarrow \mathrm{IC}_Z\Q^H\to \cH^0\big(\mu_*\Q_Y^H[d]\big)\to {\mathbf D}\big(\cH^0(\mu_*\Q_Y^H[d])\big)(-d)
\end{equation}
$$\to 
{\mathbf D}\big(\mathrm{IC}_Z\Q^H)(-d)\overset{\gamma_Z^{\vee}}\longrightarrow {\mathbf D}\big(\Q_Z^H[d]\big)(-d).$$
We also note that the intermediate composition
$$\mathrm{IC}_Z\Q^H\to \cH^0\big(\mu_*\Q_Y^H[d]\big)\to {\mathbf D}\big(\cH^0(\mu_*\Q_Y^H[d])\big)(-d)\to 
{\mathbf D}\big(\mathrm{IC}_Z\Q^H)(-d)$$
is always an isomorphism. Indeed, a morphism $\mathrm{IC}_Z\Q^H\to {\mathbf D}\big(\mathrm{IC}_Z\Q^H)(-d)$ is uniquely determined
by its restriction to a dense open subset of the smooth locus of $Z$, and on a suitable such subset over which $\mu$ is an isomorphism
this composition is the identity.

For every $k$, it follows from Lemma~\ref{general_lemma} that ${\rm Gr}_p^F{\rm DR}_X(\psi_Z)$ is an isomorphism for all $p\leq k$
if and only if $F_p\psi_Z$ is an isomorphism for every $p\leq k+n$. Since $\gamma_Z$ is surjective and $\gamma_Z^{\vee}$ is injective,
it follows from the above discussion that ${\rm Gr}_p^F{\rm DR}_X(\psi_Z)$ is an isomorphism for all $p\leq k$ if and only if
$$F_p\gamma_Z\colon F_p\Q_Z^H[d]\to F_p\mathrm{IC}_Z\Q^H\quad \text{and}$$
$$ F_{p+d}\gamma_Z^{\vee} \colon F_{p+d}{\mathbf D}(\mathrm{IC}_Z\Q^H)=F_{p-r}W_{n+r}\cH^r_Z(\cO_X)\to
F_{p+d}{\mathbf D}\big(\Q_Z^H[d]\big)=F_{p-r}\cH_Z^r(\cO_X)$$
are isomorphisms for all $p\leq k+n$ (recall that every morphism of mixed Hodge modules preserves the Hodge filtration and it is strict). 
\end{rmk}

\begin{rmk}
We note that in \cite{FL2} one says that a variety $Z$ of pure dimension $d$ has $k$-rational singularities if the composition
$$\Omega_Z^p\to \underline{\Omega}_Z^p\overset{\psi_k}\longrightarrow\R\cH om_{\cO_Z}\big(\underline{\Omega}_Z^{d-k},\omega_Z^{\bullet}[-d]\big)$$
is an isomorphism for all $p\leq k$. It is shown in \cite[Corollary~3.17]{FL2} that this definition is equivalent to the definition we use in this paper if ${\rm codim}_Z(Z_{\rm sing})\geq 2k+1$. 
Furthermore, it is shown in \cite[Theorem~3.20]{FL2} that with their definition as well, if $Z$ is a local complete intersection and has $k$-rational singularities,
then $Z$ has Du Bois singularities, and thus ${\rm codim}_Z(Z_{\rm sing})\geq 2k+1$ by Remark~\ref{sing_locus_Du_Bois}. We thus conclude
that for local complete intersection varieties, the two definitions of $k$-rational singularities agree.
\end{rmk}

\section{Characterizations of $k$-rationality for local complete intersections}\label{section3}

Let $X$ be a smooth, irreducible variety of dimension $n$ and $Z$ be a local complete intersection subvariety of pure codimension $r$ in $X$. Let $d=n-r$ be the dimension of $Z$ and $i\colon Z\hookrightarrow X$ the inclusion. We will freely use the notation introduced in the previous section.
The following is the main result of this section, which implies several of the statements in the introduction.

\begin{thm}\label{equivalent_statements}
With the above notation, for every nonnegative integer $k$, the following conditions are equivalent: 
\begin{enumerate}
	\item $\widetilde \alpha(Z)>k+r$;
	\item $F_kW_{n+r}\cH^r_Z(\cO_X)=E_k\cH^r_Z(\cO_X)$;
	\item The morphism
	\begin{equation}\label{eqn1_equivalent_statements}
	F_{p+r}\Q_Z^H[d]\twoheadrightarrow F_{p+r}\mathrm{IC}_Z\Q^H,
	\end{equation}
	induced by $\gamma_Z$ and the composition
	\begin{equation}\label{eqn2_equivalent_statements}
	F_pW_{n+r}\cH^r_Z(\cO_X)\hookrightarrow F_p\cH^r_Z(\cO_X)\hookrightarrow E_p\cH^r_Z(\cO_X),
	\end{equation}
	induced by $\gamma_Z^{\vee}$, are isomorphisms for $p\leq k$. 
		\item $Z$ has $k$-Du Bois singularities and the morphism 
	\[
	 \psi_k\colon \underline{\Omega}_{Z}^k\to \R\cH om_{\cO_{Z}}\big(\underline{\Omega}_{Z}^{d-k},\omega_{Z}\big)
	\]
	is an isomorphism; 
	\item the canonical morphism
	\[
	\Omega^p_Z \to \gr^F_{-p}{\rm DR}_X(\mathrm{IC}_Z\Q^H)[p-d]
	\] 
	is an isomorphism (in the derived category) for $p \leq k$.
\end{enumerate}
\end{thm}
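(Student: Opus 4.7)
My plan is to prove the five conditions equivalent via a cycle, splitting the argument into a formal part and a technical part. The equivalences $(3) \Leftrightarrow (4) \Leftrightarrow (5)$ together with the reduction $(3) \Leftrightarrow (2)$ are largely formal consequences of the Hodge module framework from Section \ref{section2}, while $(1) \Leftrightarrow (2)$ is the technical heart and requires the $V$-filtration machinery.

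For the formal part, I would first unpack condition (5): the canonical map $\Omega_Z^p \to \gr^F_{-p}{\rm DR}_X({\rm IC}_Z\Q^H)[p-d]$ factors as $\Omega_Z^p \to \underline{\Omega}_Z^p \to \gr^F_{-p}{\rm DR}_X({\rm IC}_Z\Q^H)[p-d]$ via the Saito isomorphism (\ref{eq_Saito_DuBois}), with the second arrow induced by $\gamma_Z$. Hence (5) is equivalent to $Z$ being $k$-Du Bois together with $\gr^F_{-p}{\rm DR}_X(\gamma_Z)$ being an isomorphism for $p \le k$. Applying Lemma \ref{general_lemma} to $\gamma_Z$ and $\gamma_Z^\vee$, combined with the identification of ${\mathbf D}({\rm IC}_Z\Q^H)(-d)$ as a Tate twist of ${\rm Gr}^W_{n+r}\cH^r_Z(\cO_X)$ from (\ref{eq_dual_IC}) and the inclusion $F_p\cH^r_Z(\cO_X) \subseteq E_p\cH^r_Z(\cO_X)$ from Section \ref{section_min_exponent}, these graded-de-Rham isomorphisms translate into the two filtration-level statements of condition (3). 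The equivalence with (4) then follows from Theorem \ref{chr_k_rat}, the factorization (\ref{eq_fact2}) of $\psi_Z$ through $\gamma_Z$ and $\gamma_Z^\vee$ established in Remark \ref{rmk_two_morphisms}, and the self-duality of $\psi_Z$ (Remark \ref{dual_psi}), which makes the ``$\gamma_Z$ half'' and the ``$\gamma_Z^\vee$ half'' equivalent conditions. The reduction $(3) \Leftrightarrow (2)$ is immediate: (2) is the extremal case $p=k$ of the second map in (3), and the cases $p < k$ follow because both $F_\bullet W_{n+r}\cH^r_Z(\cO_X)$ and $E_\bullet\cH^r_Z(\cO_X)$ are increasing and bounded below, while the first half of (3) is recovered via the self-duality just mentioned.

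The technical heart is $(1) \Leftrightarrow (2)$. By Theorem \ref{thm_local_coho}, $\widetilde\alpha(Z) \ge k+r$ is equivalent to $F_k\cH^r_Z(\cO_X) = E_k\cH^r_Z(\cO_X)$; condition (2) is strictly stronger and should correspond precisely to the strict inequality $\widetilde\alpha(Z) > k+r$. My approach is to use the Koszul-type description of $i^!\Q_X^H[n]$ from Theorem \ref{thm_CD} to express both the Hodge and weight filtrations on $\cH^r_Z(\cO_X)$ in terms of the $V$-filtration on $B_{\bff}$, following the strategy of \cite[Appendix]{FL2} in the hypersurface case. The main obstacle is carrying this out in higher codimension: identifying the lowest-weight piece $W_{n+r}\cH^r_Z(\cO_X)$ explicitly inside the Koszul presentation, reading off its Hodge filtration from the graded pieces $\mathrm{Gr}_V^j(B_{\bff})$, and finally translating the equality $F_kW_{n+r}\cH^r_Z(\cO_X) = E_k\cH^r_Z(\cO_X)$ into the position of $\delta_{\bff}$ relative to $V^{r+k+1}B_{\bff}$ as encoded in definition (\ref{eq3_intro_v2}). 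This is precisely where the higher-codimension $V$-filtration results of \cite{CD} substitute for the Bernstein-Sato polynomial tools available only in the hypersurface setting.
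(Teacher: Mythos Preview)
Your overall architecture is close to the paper's, but two steps in the ``formal part'' contain genuine gaps.

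First, the reduction $(2)\Rightarrow(3)$. You assert that the case $p=k$ of the second map implies the cases $p<k$ ``because both $F_\bullet W_{n+r}\cH^r_Z(\cO_X)$ and $E_\bullet\cH^r_Z(\cO_X)$ are increasing and bounded below.'' That reasoning is invalid: from $F_kW_{n+r}\cH^r_Z(\cO_X)=E_k\cH^r_Z(\cO_X)$ nothing formal forces $F_{k-1}W_{n+r}=E_{k-1}$. The paper proves this as Lemma~\ref{lem:6}, using the non-trivial input $I_Z\cdot F_pW_{n+r}\cH^r_Z(\cO_X)\subseteq F_{p-1}W_{n+r}\cH^r_Z(\cO_X)$ from \cite[Lemme~3.2.6]{Saito_MHP} together with $I_Z\cdot E_p=E_{p-1}$. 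Similarly, recovering the first half of (3) from (2) is not ``self-duality of $\psi_Z$'': the paper's Lemma~\ref{lem:dualfw} uses polarization on each pure piece $\gr^W_{n+r+j}\cH^r_Z(\cO_X)$, and the Tate twist by $(r+j)$ appearing there is precisely why the morphism~(\ref{eqn1_equivalent_statements}) is an isomorphism even for $p\leq k+1$, a shift your argument does not produce.

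Second, your unpacking of (5) does not yield the equivalence you claim. Writing the map in (5) as the composite $\Omega_Z^p\to\underline{\Omega}_Z^p\to\gr^F_{-p}{\rm DR}_X({\rm IC}_Z\Q^H)[p-d]$, the composite being an isomorphism only makes the first arrow a split monomorphism in the derived category; it does \emph{not} force $\Omega_Z^p\to\underline{\Omega}_Z^p$ to be an isomorphism, so you cannot read off $k$-Du Bois from (5) this way. The paper avoids this by proving $(e)\Rightarrow(b)$ instead: it dualizes (5) into a statement about $W_{n+r}\cH^r_Z(\cO_X)$ and argues by induction on $k$, using the inductive hypothesis to secure $(k-1)$-Du Bois (hence the needed codimension bound on $Z_{\rm sing}$) before comparing graded de Rham complexes. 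Finally, a small correction on the technical core $(1)\Leftrightarrow(2)$: for $k\geq 0$ the condition $\widetilde{\alpha}(Z)>k+r$ in (\ref{eq3_intro_v2}) is $F_{k+r+1}B_\bff\subseteq V^{>r-1}B_\bff$, not a condition on the position of $\delta_\bff$; and the direction $(b)\Rightarrow(a)$ (the paper's Step~6) hinges on a non-obvious claim that $\gr^F_{k+r}\ker\delta\to\gr^F_{k+r}\coker\sigma$ is an isomorphism, which is what forces $\partial_{t_i}\cdot F_{k+r}B_\bff\subseteq V^{>r-1}B_\bff$.
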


\begin{rmk}\label{rmk_van_lci}
We note that the morphism in (e) is the composition
$$\Omega_Z^p\to \underline{\Omega}_Z^p\simeq {\rm Gr}^F_{-p}{\rm DR}_X(\Q_Z^H)[p]\to 
{\rm Gr}^F_{-p}{\rm DR}_X(\mathrm{IC}_Z\Q^H)[p-d],$$
where the second map  is induced by $\gamma_Z\colon \Q_Z^H[d]\to \mathrm{IC}_Z\Q^H$. 
\end{rmk}

\begin{rmk}
Note that condition
 (d) in the theorem is equivalent to $Z$ having $k$-rational singularities 
 by Theorem~\ref{chr_k_rat}. Therefore the equivalence (a)$\Leftrightarrow$(d) is the content of Theorem~\ref{thm1_intro},
 while the equivalence (a)$\Leftrightarrow$(b) is the content of Theorem~\ref{thm2_intro}. 
 \end{rmk}
 
 We proceed with the proof of Theorem~\ref{equivalent_statements} in several steps, by showing the following implications:
 
  $(a)\Rightarrow (b) \Rightarrow (c)\Rightarrow (d)+(e)$, $(d)\Rightarrow(c)$,  and $(e)\Rightarrow (b) \Rightarrow (a)$.

\begin{proof}[Proof of Theorem~\ref{equivalent_statements}]
Since all assertions are local, we may and will assume that $X$ is affine and $Z$ is defined in $X$ by $f_1,\ldots,f_r\in\cO_X(X)$. 
In particular, we will be able to consider the $V$-filtration corresponding to these functions. We denote by $I_Z$
the ideal defining $Z$ in $X$. 

\noindent  {\bf Step 1. Proof of (a)$\Rightarrow$(b)}. Let $W_\bullet$ be the monodromy filtration on $\gr^\bullet_V B_\bff$, shifted by $n$,  uniquely characterized by: 
\begin{itemize} 
	\item $(s+\alpha)\cdot W_\bullet{\rm Gr}_V^{\alpha}B_{\bff} \subseteq W_{\bullet-2}{\rm Gr}_V^{\alpha}B_{\bff}$ and 
	\item $(s+\alpha)^j\colon \gr^W_{n+j}\gr_V^{\alpha}B_{\bff} \cong \gr^W_{n-j}\gr_V^{\alpha}B_{\bff}$ is an isomorphism 
	for all $j\geq 1$. 
\end{itemize} 
Explicitly, this is given by
\begin{equation}\label{monodromy}
W_{n+i}{\rm Gr}_V^{\alpha}B_{\bff}=\sum_j{\rm ker}\big((s+\alpha)^{i+j+1}\big)\cap {\rm Im}\big((s+\alpha)^{j}\big)
\end{equation}
Consider the map $\sigma\colon \left(\gr^{r-1}_V B_\bff\right)^{\oplus r} \xrightarrow{(t_1,t_2,\dots,t_r)} \gr^{r}_V B_\bff$. 
By~\cite{CD}*{Theorem 1.2}, for every $i$, we have an isomorphism of filtered $\cD_X$-modules
\begin{equation}\label{eq:sigma}
\gr^W_{i+r}\cH^r_Z(\cO_X)\simeq \big(\gr^W_i\coker \sigma, F[-r]\big)\quad\text{for all}\quad i\in\Z.
\end{equation}
Recall that we know that $W_{i+r}\cH^r_Z(\cO_X)=0$ for $i<n$, hence
\begin{equation}\label{eqW}
W_{n+r}\cH^r_Z(\cO_X)=\gr^W_{n+r}\cH^r_Z(\cO_X)\simeq \big(\gr^W_n\coker \sigma, F[-r]\big).
\end{equation}

Since $\widetilde\alpha(Z)>k+r$, it follows from the definition of the minimal exponent that
 $F_{k+r+1}B_\bff\subseteq V^{>r-1}B_\bff$, hence 
\[
(s+r)\cdot F_{k+r}B_\bff \subseteq \sum^r_{i=1} t_i \cdot F_{k+r+1} B_\bff \subseteq V^{>r} B_\bff.
\] 
We thus have
\begin{equation}\label{eq:ab2}
F_{k+r}\gr^r_V B_\bff \subseteq W_n\gr^r_V B_\bff
\end{equation} 
because $\ker(s+r)\subseteq W_n\gr^r_VB_\bff$ by (\ref{monodromy}). 
This implies that $F_{k+r}\coker\sigma\subseteq W_n\coker \sigma$, and using (\ref{eqW})
we conclude that
$$F_kW_{n+r}\cH^r_Z(\cO_X)=F_k\cH_Z^r(\cO_X)=E_k\cH^r_Z(\cO_X),$$
where the last equality follows from Theorem~\ref{thm_local_coho}.

\noindent {\bf Step 2. Proof of (b)$\Rightarrow$(c)}. We first prove the following

\begin{lem}\label{lem:6}
The equality $F_k W_{n+r}\cH^r_Z(\cO_X)=E_k \cH^r_Z(\cO_X)$ implies 
\[
F_p W_{n+r}\cH^r_Z(\cO_X)=E_p \cH^r_Z(\cO_X) \quad \text{for all} \quad  p\leq k.
\]
\end{lem}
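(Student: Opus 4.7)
The plan is to leverage the cited fact from~\cite{MP1} that equality $F_k\cH^r_Z(\cO_X)=E_k\cH^r_Z(\cO_X)$ forces equality at all lower levels, together with the elementary observation that under the hypothesis, the entire Hodge piece $F_k\cH^r_Z(\cO_X)$ must already sit inside the lowest weight piece $W_{n+r}\cH^r_Z(\cO_X)$.

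First I would extract from the hypothesis the intermediate equality for the full Hodge filtration. We always have the chain
\[
F_k W_{n+r}\cH^r_Z(\cO_X)\subseteq F_k\cH^r_Z(\cO_X)\subseteq E_k\cH^r_Z(\cO_X),
\]
so under the assumption that the outer terms coincide, both inclusions become equalities. In particular, $F_k\cH^r_Z(\cO_X)=E_k\cH^r_Z(\cO_X)$, and also $F_k\cH^r_Z(\cO_X)\subseteq W_{n+r}\cH^r_Z(\cO_X)$.

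Next I would invoke the propagation statement from~\cite{MP1} recalled in Section~\ref{section_k_DuBois} (equality at level $k$ implies equality at every $p<k$) to conclude $F_p\cH^r_Z(\cO_X)=E_p\cH^r_Z(\cO_X)$ for all $p\leq k$. For the weight-filtered pieces, since the Hodge filtration is increasing, the inclusion $F_k\cH^r_Z(\cO_X)\subseteq W_{n+r}\cH^r_Z(\cO_X)$ forces $F_p\cH^r_Z(\cO_X)\subseteq W_{n+r}\cH^r_Z(\cO_X)$ for every $p\leq k$, and therefore $F_pW_{n+r}\cH^r_Z(\cO_X)=F_p\cH^r_Z(\cO_X)$ in that range. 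Combining the two equalities gives the desired conclusion
\[
F_pW_{n+r}\cH^r_Z(\cO_X)=E_p\cH^r_Z(\cO_X)\quad\text{for all}\quad p\leq k.
\]

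There is no real obstacle here: the argument is a purely formal manipulation of the three filtrations, once the downward propagation result from~\cite{MP1} is granted. The only point that requires noting is the sandwich that upgrades the assumed equality of weight-filtered pieces to an equality of the full Hodge and order filtrations at level $k$, after which the rest is immediate.
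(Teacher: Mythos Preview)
Your proof is correct, but it takes a different route from the paper's. The paper argues by direct descending induction: it uses that $I_Z\cdot F_pW_{n+r}\cH^r_Z(\cO_X)\subseteq F_{p-1}W_{n+r}\cH^r_Z(\cO_X)$ (a consequence of Saito's \cite[Lemme~3.2.6]{Saito_MHP} applied to the Hodge module $W_{n+r}\cH^r_Z(\cO_X)$) together with the elementary identity $I_Z\cdot E_p\cH^r_Z(\cO_X)=E_{p-1}\cH^r_Z(\cO_X)$, and multiplies the assumed equality by $I_Z$ to step down from $p$ to $p-1$. You instead split the statement into two pieces: the sandwich $F_kW_{n+r}\subseteq F_k\subseteq E_k$ yields both $F_k=E_k$ and $F_k\subseteq W_{n+r}$; you then invoke the downward propagation $F_p=E_p$ for $p\leq k$ already established in \cite{MP1}, while the containment $F_p\subseteq F_k\subseteq W_{n+r}$ (the Hodge filtration being increasing) gives $F_pW_{n+r}=F_p$ in the same range. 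Your argument is shorter because it outsources the inductive work to \cite{MP1}; the paper's argument is more self-contained and shows that the same $I_Z$-multiplication trick works uniformly for the weight-filtered piece, without appealing to the earlier propagation result.
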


\begin{proof}
Recall first that $F_p\cH^r_Z(\cO_X)\subseteq E_p\cH^r_Z(\cO_X)$ for all $p$ by \cite[Proposition~3.4]{MP1},
hence $F_p W_{n+r}\cH^r_Z(\cO_X)=E_p \cH^r_Z(\cO_X)$ if and only if the inclusion ``$\supseteq$" holds. 
 Since $W_{n+r}\cH^r_Z(\cO_X)$ is a Hodge module supported on $Z$, we have
\[
I_Z \cdot F_p W_{n+r}\cH^r_Z(\cO_X) \subseteq F_{p-1} W_{n+r}\cH^r_Z(\cO_X)\quad\text{for all}\quad p\in\Z
\] 
(see \cite[Lemme~3.2.6]{Saito_MHP}).
On the other hand, it follows easily from the definition of the filtration $E_{\bullet}\cH_Z^r(\cO_X)$ that we have
\[
 I_Z\cdot E_p\cH^r_Z(\cO_X)=E_{p-1}\cH^r_Z(\cO_X)\quad\text{for all}\quad p\geq 1.
\]
The assertion in the lemma now follows by decreasing induction on $p$.
\end{proof}

We next use duality  to prove the following

\begin{lem}\label{lem:dualfw}
If $F_pW_{n+r}\cH^r_Z(\cO_X)=F_p \cH^r_Z(\cO_X)$ for some $p\in \Z$, then the surjective map
\[
F_{p+r+1}\Q_Z^H[d] \to F_{p+r+1}{\rm IC}_Z\Q^H
\]
induced by $\gamma_Z$ is an isomorphism.
\end{lem}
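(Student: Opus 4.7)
The plan is to analyze the kernel $K := \ker(\gamma_Z)$, which is a mixed Hodge module supported on $Z_{\rm sing}$ and fits into the short exact sequence
\[ 0 \to K \to \Q_Z^H[d] \xrightarrow{\gamma_Z} \mathrm{IC}_Z\Q^H \to 0. \]
Since $\gamma_Z$ is a morphism of mixed Hodge modules and hence strict with respect to the Hodge filtration, the surjection $F_{p+r+1}\gamma_Z$ is an isomorphism if and only if $F_{p+r+1}K = 0$; this vanishing is what I will ultimately establish.

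First I would dualize: applying the functor $\mathbf{D}(-)(-d)$ to the above short exact sequence and unpacking the identifications in Remark~\ref{rmk_two_morphisms} together with (\ref{eq_dual_IC}), one recovers (up to a Tate twist by $r$) the canonical short exact sequence
\[ 0 \to W_{n+r}\cH^r_Z(\cO_X) \to \cH^r_Z(\cO_X) \to \cH^r_Z(\cO_X)/W_{n+r}\cH^r_Z(\cO_X) \to 0, \]
which identifies $\mathbf{D}(K)(-d)$ with $\bigl(\cH^r_Z(\cO_X)/W_{n+r}\cH^r_Z(\cO_X)\bigr)(r)$. By strictness of the quotient morphism on the right, the hypothesis $F_p W_{n+r}\cH^r_Z(\cO_X) = F_p \cH^r_Z(\cO_X)$ becomes $F_p\bigl(\cH^r_Z/W_{n+r}\cH^r_Z\bigr) = 0$, and tracking the Tate twist this translates to $F_{p+n}\mathbf{D}(K) = 0$.

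Next comes the key weight comparison. Because $W_{n+r}\cH^r_Z(\cO_X)$ is the lowest nonzero weight piece of $\cH^r_Z(\cO_X)$, the quotient $\cH^r_Z/W_{n+r}\cH^r_Z$ has weights $\geq n+r+1$; after Tate twisting, this forces $\mathbf{D}(K)$ to have weights $\geq 1-d$, or equivalently $K$ to have weights $\leq d-1$. I would then finish by invoking the following elementary fact: if $N$ is a mixed Hodge module with weights at most $w_0$ and $F_b \mathbf{D}(N) = 0$, then $F_c N = 0$ for every $c \leq b - w_0$. This is proved by strictness of the weight filtration (which reduces the vanishing of $F_cN$ to that of $F_c \gr^W_w N$ for $w \leq w_0$), together with the identity $\mathbf{D}(\gr^W_w N) \simeq \gr^W_w N(w)$ for the pure pieces, which turns $F_c\gr^W_w N$ into $F_{c+w}\gr^W_{-w}\mathbf{D}(N)$. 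Applied with $b = p+n$ and $w_0 = d-1$, this delivers $F_{p+r+1}K = 0$.

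The subtle point where I expect to need the most care is establishing the strict weight bound $\leq d-1$ on $K$: the shift $p \mapsto p+r+1$ in the conclusion (as opposed to $p+r$) comes precisely from the strict inequality $n+r < n+r+1$ between the lowest weight of $\cH^r_Z(\cO_X)$ and the weights appearing in its quotient by $W_{n+r}$. Everything else is formal bookkeeping with strictness and the duality formalism for mixed Hodge modules.
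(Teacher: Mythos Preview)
Your proposal is correct and follows essentially the same approach as the paper: both arguments reduce to showing $F_{p+r+1}W_{d-1}\Q_Z^H[d]=0$ by dualizing to $\cH^r_Z(\cO_X)$, using polarizability of the pure weight-graded pieces to convert Hodge-filtration levels across the duality, and exploiting the strict weight inequality $n+r<n+r+1$ to gain the extra ``$+1$''. The only difference is packaging: you isolate the kernel $K$ and phrase the key step as a general fact about mixed Hodge modules, while the paper works directly with each $\gr^W_{d-j}\Q_Z^H[d]\cong\gr^W_{n+r+j}\cH^r_Z(\cO_X)(r+j)$; unwinding your abstract lemma for $N=K$ recovers exactly these identifications.
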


\begin{proof}
The equality $F_pW_{n+r}\cH^r_Z(\cO_X)=F_p\cH^r_Z(\cO_X)$ is equivalent to
$F_p \gr^W_{n+r+j}\cH^r_Z(\cO_X)=0$ for all $j\geq 1$. Since $\gr^W_{n+r+j}\cH^r_Z(\cO_X)$ underlies a polarizable pure Hodge module of weight $n+r+j$, the choice of a polarization gives an isomorphism of filtered $\cD_X$-modules:
\begin{equation}\label{eq:dgrw1}
\bD\big(\gr^W_{n+r+j}\cH^r_Z(\cO_X)\big) \overset{\simeq}\longrightarrow \left(\gr^W_{n+r+j}\cH^r_Z(\cO_X)\right) (n+r+j).
\end{equation}
On the other hand, 
\begin{equation}\label{eq:dgrw2}
\begin{aligned}
\bD\big(\gr^W_{n+r+j}\cH^r_Z(\cO_X)\big) &\cong  \gr^W_{-n-r-j} \bD\big(\cH^r_Z(\cO_X)\big) \\
& \cong \gr^W_{-n-r-j} \left(\Q_Z^H[d](n)\right) \cong \big(\gr^W_{d-j} \Q_Z^H[d]\big)(n).
\end{aligned}
\end{equation}
Combining the two equations~\eqref{eq:dgrw1} and~\eqref{eq:dgrw2} yields
\[
\left(\gr^W_{n+r+j}\cH^r_Z(\cO_X)\right) (r+j) \cong \gr^W_{d-j}\Q_Z^H[d],
\]
as filtered $\cD_X$-modules, which implies
\[
F_{p+1-j} \gr^W_{n+r+j}\cH^r_Z(\cO_X) \cong F_{p+r+1} \gr^W_{d-j}\Q^H_Z[d].
\]
We have seen that our hypothesis gives $F_{p+1-j} \gr^W_{n+r+j}\cH^r_Z(\cO_X)=0$ for $j\geq 1$, hence
 $F_{p+r+1} \gr^W_{d-j}\Q_Z^H[d]=0$ for $j\geq 1$ and thus
  $F_{p+r+1} W_{d-1}\Q_Z^H[d]=0$. This implies the conclusion of the lemma by definition of $\gamma_Z$.
\end{proof}

Returning to the proof of the implication (b)$\Rightarrow$(c), 
note that the assertion in Lemma~\ref{lem:6}
gives the fact that the morphism (\ref{eqn2_equivalent_statements}) is an isomorphism for $p\leq k$.
Similarly, by combining Lemmas~\ref{lem:6} and \ref{lem:dualfw} we conclude that the morphism 
(\ref{eqn1_equivalent_statements}) is an isomorphism for $p\leq k$ (in fact, for $p\leq k+1$). We thus have the assertion in (c).

\noindent {\bf Step 3. Proof of (c)$\Rightarrow$(d)+(e)}.  The surjectivity of the morphism in
(\ref{eqn2_equivalent_statements}) implies, in particular, that $Z$ is $k$-Du Bois by \cite[Theorem~F]{MP1}. 
Since $\psi_k=\gr^F_{-k}{\rm DR}_X(\psi_Z)[k-d]$, we see that $\psi_k$ is an isomorphism if and only if
$\gr^F_{-k}{\rm DR}_X(\psi_Z)$ is an isomorphism, which by (\ref{eq_compat_with_duality}) holds if
and only if $\gr^F_{k-d}{\rm DR}_X(\psi_Z)$ is an isomorphism (recall that ${\mathbf D}(\psi_Z)=\psi_Z(d)$, see Remark~\ref{dual_psi}). 
We thus conclude that the condition in (d) holds. 

Similarly, once we know that $Z$ is $k$-Du Bois, the condition in (e) is equivalent with ${\rm Gr}^F_{-p}{\rm DR}_X(\gamma_Z)$ being
an isomorphism for $p\leq k$, which is equivalent by (\ref{eq_compat_with_duality}) with ${\rm Gr}^F_{p-d}{\rm DR}_X(\gamma_Z^{\vee})$
being an isomorphism for all $p\leq k$. This is implied by $F_{p+r}\gamma_Z^{\vee}$ being an isomorphism for all $p\leq k$, but this is precisely 
the morphism $F_pW_{n+r}\cH^r_Z(\cO_X)\to F_p\cH^r_Z(\cO_X)$. Therefore the condition in (e) holds as well.

\noindent {\bf Step 4. Proof of (d)$\Rightarrow$(c)}. 
It follows from Theorem~\ref{chr_k_rat} that the conditions in (d) are equivalent to $Z$ having $k$-rational singularities. In particular,
since we have these conditions for $k$, we also have them for $k-1$. In particular, we know that 
$\psi_k={\rm Gr}^F_{p-d}{\rm DR}_X(\psi_Z)[p-d]$ is an isomorphism for all $p\leq k$. Using (\ref{eq_compat_with_duality}) and the fact that
${\mathbf D}(\psi_Z)=\psi_Z(d)$, we conclude that ${\rm Gr}^F_{p-d}{\rm DR}_X(\psi_Z)$ is an isomorphism for all $p\leq k$.
Lemma~\ref{general_lemma} thus implies that $F_{p+r}\psi_Z$ is an isomorphism for all $p\leq k$. As we have seen in
Remark~\ref{rmk_two_morphisms}, this implies that the morphisms (\ref{eqn1_equivalent_statements}) and (\ref{eqn2_equivalent_statements})
are isomorphisms for all $p\leq k$ (for the latter morphism, we also use the fact that $F_p\cH_Z^r(\cO_X)=E_p\cH^r_Z(\cO_X)$ for $p\leq k$,
due to the fact that $Z$ has $k$-Du Bois singularities). We thus have the condition in (c).

\noindent {\bf Step 5. Proof of (e)$\Rightarrow$(b)}. 
If $k=0$, applying $\R\cH om_{\cO_X}\big(-,\omega_X[r]\big)$ to the isomorphism in (e) gives via (\ref{eq_compat_with_duality}) an isomorphism
$${\rm Gr}^F_{-n}{\rm DR}_XW_{n+r}\cH^r_Z(\cO_X)\to {\rm Gr}^F_{-n}{\rm DR}_X\cH^r_Z(\cO_X)\to {\mathcal Ext}^r_{\cO_X}(\cO_Z,\omega_X)$$
(note that ${\mathcal Ext}^i_{\cO_X}(\cO_Z,\omega_X)=0$ for $i\neq r$ since $Z$ is a local complete intersection of pure codimension $r$).
We have
$${\rm Gr}^F_{-n}{\rm DR}_X\cH^r_Z(\cO_X)=\omega_X\otimes_{\cO_X}F_0\cH^r_Z(\cO_X)\quad\text{and}$$
$${\rm Gr}^F_{-n}{\rm DR}_XW_{n+r}\cH^r_Z(\cO_X)=\omega_X\otimes_{\cO_X}F_0W_{n+r}\cH^r_Z(\cO_X),$$
while the image of the inclusion ${\mathcal Ext}^r_{\cO_X}(\cO_Z,\omega_X)\hookrightarrow \omega_X\otimes_{\cO_X}\cH^r_Z(\cO_X)$ is
$\omega_X\otimes_{\cO_X}E_0\cH^r_Z(\cO_X)$. We thus obtain the condition in (b) in this case.

From now on we assume $k\geq 1$. Arguing by induction on $k$, we may and will assume that 
$F_pW_{n+r}\cH^r_Z(\cO_X)=E_p\cH^r_Z(\cO_X)$ for $p\leq k-1$. In particular, we know that $Z$ has $(k-1)$-Du Bois singularities, and thus 
${\rm codim}_Z(Z_{\rm sing})\geq 2k-1\geq k$ by \cite[Corollary~3.40]{MP1}. 
We only need to prove that the injection $F_{k}W_{n+r}\cH^r_Z(\cO_X) \hookrightarrow E_{k}\cH^r_Z(\cO_X)$ is indeed an isomorphism. 
Moreover, because we know the corresponding assertions for the lower pieces of the filtrations, it follows from Lemma~\ref{general_lemma}
that it is enough to show that the induced morphism 
\begin{equation}\label{eq:dtob}
\gr^F_{k-n} \mathrm{DR}_X W_{n+r}\cH^r_Z(\cO_X)\to \gr^E_{k-n} \mathrm{DR}_X \cH^r_Z(\cO_X)
\end{equation}
is an isomorphism (in the derived category). 

Applying $\R\cH om_{\cO_X}\big(-,\omega_X[r+k]\big)$ to the isomorphism in (e) implies via (\ref{eq_compat_with_duality}) that the composition
\begin{equation}\label{eq:dtob2}
{\rm Gr}^F_{k-n}{\rm DR}_XW_{n+r}\cH^r_Z(\cO_X)\to {\rm Gr}^F_{k-n}{\rm DR}_X\cH^r_Z(\cO_X)\to \R\cH om_{\cO_X}(\Omega_Z^k,\omega_X[r+k]\big)
\end{equation}
is an isomorphism. On the other hand, since ${\rm codim}_Z(Z_{\rm sing})\geq k$, it follows from 
 \cite[Section~5.2]{MP1}
that the second map in (\ref{eq:dtob2}) gets identified with the canonical morphism
$${\rm Gr}^F_{k-n}{\rm DR}_X\cH^r_Z(\cO_X)\to {\rm Gr}^E_{k-n}{\rm DR}_X\cH^r_Z(\cO_X).$$
We thus conclude that indeed (\ref{eq:dtob}) is an isomorphism.

\noindent {\bf Step 6. Proof of (b)$\Rightarrow$(a)}. 
In addition to the map $\sigma\colon \left(\gr^{r-1}_V B_\bff\right)^{\oplus r} \xrightarrow{(t_1,t_2,\dots,t_r)} \gr^{r}_V B_\bff$ 
that we used in Step 1, we also consider the map
$\delta\colon \gr^r_V B_\bff \xrightarrow{(\partial_{t_1},\partial_{t_2},\dots,\partial_{t_r})} \big(\gr^{r-1}_V B_\bff(-1)\big)^{\oplus r}$.
The key point is to show the following

\noindent {\bf Claim}. The condition in (b) implies that 
the composition of the canonical morphisms 
\begin{equation}\label{eq:ckc}
\gr^F_{k+r}\ker\delta \hookrightarrow  \gr^F_{k+r}\gr^r_V B_\bff \twoheadrightarrow \gr^F_{k+r} \coker\sigma,
\end{equation}
is an isomorphism. 

By  \cite[Theorem~1.2]{CD}, we have an isomorphism of filtered $\cD_X$-modules
\begin{equation}\label{eq:cd12}
\gr^W_{d+i}\Q_Z^H[d] \cong \gr^W_{n+i}\ker\delta \quad\text{for all}\quad i\in\Z.
\end{equation}
In particular, we have $W_n\ker\delta=\ker\delta$ (recall that, similarly, the isomorphism (\ref{eq:sigma})
implies $W_{n-1}\coker\sigma=0$).
Note that the inclusion $W_n\ker\delta \hookrightarrow  W_n\gr^r_V B_\bff$ induces a canonical filtered morphism 
\begin{equation}\label{eq:dkcan}
\gr^W_n\ker\delta  \to W_n\coker\sigma=\frac{W_n\gr^r_V B_\bff}{W_n\gr^r_V B_\bff\cap {\rm im\,}\sigma}.
\end{equation}
Indeed, the morphsim is well-defined because
\[
W_{n-1}\ker\delta \subseteq W_{n-1}\gr^r_VB_\bff=(s+r)\cdot W_{n+1}\gr^r_VB_\bff=\sum^r_{i=1} t_i\partial_{t_i}W_{n+1}\gr^r_VB_\bff \subseteq {\rm im\,}\sigma.
\]
We deduce that the canonical map $\ker \delta \to \coker \sigma$ factors as 
\[
\ker\delta \to \gr^W_n\ker\delta \to W_n\coker\sigma \to \coker\sigma.
\]
Furthermore, the canonical maps $\gr^F_{k+r}\ker\delta \to \gr^F_{k+r}\gr^W_n\ker\delta$ and $\gr^F_{k+r}W_n\coker\sigma \to \gr^F_{k+r}\coker\sigma$ are isomorphisms because of (\ref{eq:sigma}) and (\ref{eq:cd12}), together with the fact that
the canonical map
\[
\gr^F_{p+r}\Q_Z^H[d] \to\gr^F_{p+r}\gr^W_d\Q_Z^H[d]
\]
is an isomorphism for $p\leq k+1$ by Lemma~\ref{lem:dualfw}. Therefore the claim is now reduced to the
assertion that \eqref{eq:dkcan} is a filtered isomorphism. Clearly,~\eqref{eq:dkcan} is a filtered isomorphism over the complement 
$V=X\smallsetminus Z_{\rm sing}$ of the singular locus of $Z$, due to
\[
\ker\delta\vert_V =\gr^r_V B_\bff\vert_V = \coker\sigma\vert_V
\]
preserving the Hodge filtration (this follows from the fact that if $Z$ is smooth, then $V^rB_{\bff}=B_{\bff}$ by (\ref{eq_char_V_smooth}), hence $\gr_V^{r-1}B_{\bff}=0$).
Therefore \eqref{eq:dkcan} is an isomorphism of $\cD_X$-modules because its source and target decompose by~\eqref{eq:cd12} and (\ref{eq:sigma})
as direct sums of simple $\cD$-modules, corresponding to the irreducible components of $Z$. Moreover, it is even a filtered isomorphism thanks to the fact that the Hodge filtration is uniquely determined by the regular locus~\cite{Saito_MHP}*{(3.2.2.2)}. This completes the proof of the claim.

To conclude the proof, note that the condition in (b) implies that $\widetilde\alpha(Z)\geq k+r$ by 
Theorem~\ref{thm_local_coho}. Therefore we have
$$\gr^F_{k+r}\gr^r_VB_\bff=\gr^F_{k+r}B_\bff/I_Z\cdot \gr^F_{k+r}B_\bff=\gr^F_{k+r} \coker\sigma,$$
where the first equality follows from the fact that $F_{k+r}V^{>r}B_{\bff}=\sum_{i=1}^r t_i\cdot F_{k+r}V^{>r-1}B_{\bff}$
by \cite[Theorem~1.1]{CD}. 
The claim implies that the composition
\[
\gr^F_{k+r}\ker\delta\hookrightarrow \gr^F_{k+r}\gr^r_VB_\bff \xrightarrow{=} \gr^F_{k+r} \coker\sigma,
\]
is an isomorphism, hence
$\delta$ is zero on $\gr^F_{k+r}\gr^r_V B_\bff=\gr^F_{k+r} B_\bff/I_Z\cdot\gr^F_{k+r} B_\bff$. Therefore
$$\partial_{t_i}\cdot F_{k+r}B_{\bff}\subseteq F_{k+r}B_{\bff}+V^{>r-1}B_{\bff}\subseteq V^rB_{\bff}+V^{>r-1}B_{\bff}\subseteq V^{>r-1}B_{\bff}\quad \text{for}\quad 1\leq i\leq r,$$
where the second inclusion comes from the fact that $\widetilde{\alpha}(Z)\geq k+r$. 
This implies $F_{k+r+1}B_{\bff}\subseteq V^{>r-1}B_{\bff}$, which is equivalent to $\widetilde{\alpha}(Z)>k+r$. 
This completes the proof of this step and thus the proof of the theorem.
\end{proof}

We next prove the two corollaries stated in the Introduction:

\begin{proof}[Proof of Corollary~\ref{cor2_intro}]
The assertion follows from the fact that $Z$ has $k$-Du Bois singularities if and only if $\widetilde{\alpha}(Z)\geq k+r$,
while by Theorem~\ref{thm1_intro}, $Z$ has $(k-1)$-rational singularities if and only if $\widetilde{\alpha}(Z)>k+r-1$. 
\end{proof}

\begin{proof}[Proof of Corollary~\ref{cor2.5_intro}]
We may assume that $Z$ is irreducible and affine and let $Z\hookrightarrow X$ be a closed embedding, of codimension $r$, with $X$ a smooth,
irreducible variety. The assertion to prove is trivial if $Z$ is smooth (with the convention that the empty set has infinite codimension),
hence we may and will assume that $Z$ is singular. If $s=\dim(Z_{\rm sing})$ and
$H$ is the intersection of general hyperplanes sections in $X$, then $Z':=Z\cap H$ is a local complete intersection variety
with nonempty, $0$-dimensional singular locus,
and $\widetilde{\alpha}(Z')=\widetilde{\alpha}(Z)$ by \cite{CDMO}*{Theorem 1.2}. In particular, it follows from
Theorem~\ref{thm1_intro} that $\widetilde{\alpha}(Z')>k+r$. 
Since ${\rm codim}_Z(Z_{\rm sing})={\rm codim}_{Z'}(Z'_{\rm sing})$, we may
replace $Z$ by $Z'$ to assume that
$Z_{\rm sing}$ is nonempty and zero-dimensional. We then need to show that $d:=\dim(Z)\geq 2k+2$. 

Let $x\in Z_{\rm sing}$. By (\ref{eq_bd_min_exp}), we have
\[
\widetilde{\alpha}(Z) \leq \dim(X)-\frac{1}{2}\dim_{\C}T_x(Z)=(d+r)-\frac{1}{2}\dim_{\C}T_x(Z).
\] 
Since $x\in Z_{\rm sing}$, we have $\dim_{\C}T_x(Z) \geq \dim(Z)+1=d+1$, hence
\[
\widetilde \alpha(Z) \leq (d+r)-\frac{1}{2}(d+1)=\frac{d-1}{2}+r.
\]
Since $\widetilde\alpha(Z)> k+r$, we conclude that $k+r<\frac{d-1}{2}+r$, hence $d>2k+1$.
We thus conclude that $d\geq 2k+2$. 
\end{proof}

\section{Non-vanishing result for the Du Bois complex}\label{section_nonvanishing}

In this section we show that for singular, $d$-dimensional, local complete intersection varieties $Z$ with $k$-rational singularities, where $k\geq 1$, 
the cohomology sheaf $\cH^k(\underline{\Omega}_Z^{d-k})$ does not vanish.

\begin{proof}[Proof of Theorem~\ref{thm_nonvanishing}]
Note first that Theorem~\ref{chr_k_rat} gives an isomorphism 
$$\Omega_Z^k\simeq \R\cH om_{\cO_Z}(\underline{\Omega}_Z^{d-k},\omega_Z),$$
and since $\R\cH om_{\cO_Z}(-,\omega_Z)$ is a duality, we get an isomorphism
$$\underline{\Omega}_Z^{d-k}\simeq \R\cH om_{\cO_Z}(\Omega_Z^k,\omega_Z).$$
The first isomorphism in the theorem follows by taking that $k$-th cohomology sheaf. 

It is shown in \cite[Section~5.2]{MP1} that since
$Z$ has $k$-Du Bois singularities (more precisely, since ${\rm codim}_Z(Z_{\rm sing})\geq k$), the sheaf $\Omega_Z^k$ is the $0$-th
cohomology of the complex
$$0\to {\rm Sym}^k({\mathcal N}_{Z/X})^{\vee}\to \Omega_X^1\otimes_{\cO_X}{\rm Sym}^{k-1}({\mathcal N}_{Z/X})^{\vee}\to
\ldots\to \Omega_X^{k-1}\otimes_{\cO_X}{\mathcal N}_{Z/X}^{\vee}\to\Omega_X^k\otimes_{\cO_X}\cO_Z\to 0,$$
placed in cohomological degrees $-k,\ldots,0$. Since this is a resolution of $\Omega_Z^k$ by locally free $\cO_Z$-modules, it follows that
$${\mathcal Ext}^k_{\cO_Z}(\Omega_Z^k,\omega_Z)\simeq \omega_Z\otimes_{\cO_Z}{\mathcal Ext}^k_{\cO_Z}(\Omega_Z^k,\cO_Z)$$
$$\simeq\omega_Z\otimes_{\cO_Z} {\rm coker}\big({\mathcal T}_X\otimes_{\cO_X}{\rm Sym}_{\cO_Z}^{k-1}({\mathcal N}_{Z/X})\to 
{\rm Sym}_{\cO_Z}^{k}({\mathcal N}_{Z/X})\big)\simeq \omega_Z\otimes_{\cO_Z}{\rm Sym}^k_{\cO_Z}({\mathcal Q}),$$
where the last isomorphism follows from \cite{DE}*{Proposition A2.2(d)}.

In order to see that $\cH^k(\underline{\Omega}_Z^{d-k})_x\neq 0$ if $x\in Z$ is a singular point, it is enough to consider, in a neighborhood of $x$,
a closed immersion $Z\hookrightarrow X$ such that $T_xZ=T_xX$. In this case the morphism of locally free $\cO_Z$-modules
$${\mathcal T}_X\otimes_{\cO_X}{\rm Sym}_{\cO_Z}^{k-1}({\mathcal N}_{Z/X})\to 
{\rm Sym}_{\cO_Z}^{k}({\mathcal N}_{Z/X})$$
is given by a matrix whose entries all vanish at $x$. We thus conclude that the minimal number of generators of
$\cH^k(\underline{\Omega}_Z^{d-k})_x$ is equal to ${\rm rank}\big({\rm Sym}_{\cO_Z}^k({\mathcal N}_{Z/X})\big)={{e-d+k-1}\choose k}$,
where $e=\dim_{\C}T_xZ$, hence it is nonzero since $e\geq d+1$. This concludes the proof.
\end{proof}

\section{Generation level of the Hodge filtration in terms of the minimal exponent}\label{section4}

In this section we prove the bound on the level of generation in terms of the minimal exponent.

\begin{proof}[Proof of Theorem~\ref{thm4_intro}]
Let $d=\dim(Z)=n-r$. 
The starting point is the observation that for every mixed Hodge module $M$ on $X$, the Hodge filtration is generated at level $q$ if $\cH^0 \gr^F_{p-n} \DR_X(M) = 0$ for all $p >q$. Recall that by (\ref{eq_compat_with_duality}), we have 
\[  \gr^F_{p-n} \DR_X(M)\simeq \R\cH om_{\cO_X}\big(\gr^F_{n-p}\DR_X(\mathbf D(M)),\omega_X[n]\big).\]
If we apply this with $M=\cH^ri^!\Q_X^H[n]=\cH^r_Z(\cO_X)$, where $i\colon Z\hookrightarrow X$ is the inclusion,
since ${\mathbf D}(M)\simeq \Q^H_Z[d](n)$, we conclude that 
\[ \gr^F_{p-n} \DR_X \cH^r_Z(\cO_X) \simeq \R\cH om_{\cO_X}\big(\gr^F_{-p}\DR_X\Q_Z^H[d] ,\omega_X[n]\big).\]
If we apply $\cH^0(-)$ on both sides, we conclude that the Hodge filtration on $\cH^r_Z(\cO_X)$ is generated at level $q$ if 
\begin{equation}\label{eq1_thm4}
\cE xt^n_{\cO_X}\big(\gr^F_{-p} \DR_X\Q_Z^H[d],\cO_X\big) = 0
\end{equation}
for all $p>q$.

Recall now that for a bounded complex of $\cO_X$-modules $K^\bullet$ and an $\cO_X$-module $\cF$, 
there is a spectral sequence 
\[ E_1^{i,j} = \cE xt_{\cO_X}^j(K^{-i},\cF) \implies \cE xt^{i+j}_{\cO_X}(K^\bullet,\cF).\]
We take $K^{\bullet}$ to be the complex ${\rm Gr}^F_{-p}{\rm DR}_X\Q_Z^H[d]$, so that
$$K^{-\ell}=\Omega_X^{n-\ell}\otimes_{\cO_X}\gr^F_{n-\ell-p}\Q_Z^H[d].$$
Therefore the vanishing in (\ref{eq1_thm4}) holds if 
for all $j\in\{0,\ldots,n\}$, we have 
\[ \cE xt_{\cO_X}^j \big(\Omega_X^{n-(n-j)}\otimes_{\cO_X}\gr^F_{n-(n-j)-p}\Q_Z^H[d],\omega_X\big)=0,\]
or equivalently,
\[ \cE xt_{\cO_X}^j \big(\gr^F_{j-p}\Q_Z^H[d], \cO_X\big) = 0.\]
We conclude that in order to complete the proof of the theorem, it is enough to show 
the following claim:
\begin{claim} For all $p \geq n - \lceil \widetilde{\alpha}(Z) \rceil$ and all $j\in \{0,1,\dots, n\}$, we have
\begin{equation}\label{eq_claim}
\cE xt_{\cO_X}^j \big(\gr^F_{j-p}\Q_Z^H[d], \cO_X\big) = 0.
\end{equation}
\end{claim}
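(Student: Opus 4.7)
The plan is to reduce the Claim to a depth bound on the coherent $\cO_X$-modules $\gr^F_m \Q_Z^H[d]$, and then to establish this depth bound by identifying these graded pieces with those of the intersection complex Hodge module $\mathrm{IC}_Z\Q^H$ in the relevant range of $m$. Setting $m := j-p$ and $\ell := \lceil\widetilde\alpha(Z)\rceil$, the Claim asks for $\cE xt^j_{\cO_X}(\gr^F_m \Q_Z^H[d], \cO_X)=0$ for every $m$ and every $j\geq m+n-\ell$. Auslander--Buchsbaum on the smooth ambient $X$ translates this into the single bound $\depth_{\cO_X}\gr^F_m\Q_Z^H[d]\geq \ell-m+1$ for every $m$. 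This is vacuous for $m>\ell$, and for $m<r$ it also holds trivially, since Theorem~\ref{thm_CD} exhibits $\Q_Z^H[d]$ as $\ker\delta\subseteq\gr^r_V B_\bff$ with the induced Hodge filtration, and $F_{r-1}B_\bff=0$ forces $\gr^F_m\Q_Z^H[d]=0$.

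The interesting range is $r\leq m\leq\ell$, where I would prove the identification $\gr^F_m\Q_Z^H[d]=\gr^F_m\mathrm{IC}_Z\Q^H$, meaning that the lower-weight part $W_{d-1}\Q_Z^H[d]$ contributes nothing in these Hodge degrees. By the very definition of $\ell$ one has $\widetilde\alpha(Z)>\ell-1$, so the equivalence (a)$\Leftrightarrow$(b) of Theorem~\ref{equivalent_statements} combined with Theorem~\ref{thm_local_coho} gives $F_{\ell-r-1}W_{n+r}\cH^r_Z(\cO_X)=F_{\ell-r-1}\cH^r_Z(\cO_X)$. Lemma~\ref{lem:dualfw}, applied with $p=\ell-r-1$ (or with the trivially satisfied $p=-1$ in the edge case $\ell=r$), then yields that $F_\ell\gamma_Z$ is an isomorphism, which is equivalent to $F_\ell W_{d-1}\Q_Z^H[d]=0$, and hence $F_m W_{d-1}\Q_Z^H[d]=0$ for all $m\leq\ell$. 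This gives the desired identification with $\gr^F_m\mathrm{IC}_Z\Q^H$ throughout the range $r \le m \le \ell$.

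To finish, I appeal to Saito's theorem that the graded pieces of the Hodge filtration on a polarizable pure Hodge module with strict support of dimension $d$ are Cohen--Macaulay $\cO_X$-modules of depth $d$. Applied to $\mathrm{IC}_Z\Q^H$, which is pure of weight $d$ with strict support $Z$, this yields $\depth_{\cO_X}\gr^F_m\mathrm{IC}_Z\Q^H=d$ whenever nonzero. Since $Z$ is singular, the bound \eqref{eq_bd_min_exp} forces $\widetilde\alpha(Z)\leq n-1$, hence $\ell\leq d+r-1$ and therefore $\ell-m+1\leq d$ throughout the range $r\leq m\leq\ell$. This completes the depth bound and thus the proof of the Claim. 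The main technical input I expect to rely on is the Cohen--Macaulayness of the $F$-graded pieces of a pure Hodge module, a standard but nontrivial ingredient from Saito's theory; everything else is assembled from the equivalences and lemmas already established earlier in the paper.
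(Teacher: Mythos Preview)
Your reduction of the Claim to a depth bound via Auslander--Buchsbaum is clean, and your identification $\gr^F_m\Q_Z^H[d]\simeq\gr^F_m\mathrm{IC}_Z\Q^H$ for $m\leq\ell$ is correctly deduced from Theorem~\ref{equivalent_statements} and Lemma~\ref{lem:dualfw}. The gap is in the last step: the assertion that each nonzero $\gr^F_m\mathrm{IC}_Z\Q^H$ is Cohen--Macaulay of depth $d$ as an $\cO_X$-module is not a theorem of Saito. What Saito proves (see \cite[Lemme~5.1.13]{Saito_MHP}) is that for a polarizable Hodge module $M$, the full associated graded $\gr^F_\bullet M$ is Cohen--Macaulay over $\gr^F_\bullet\cD_X=\mathrm{Sym}_{\cO_X}\cT_X$; this is a statement about the graded module over the polynomial ring, and it does not imply that each individual $\cO_X$-module $\gr^F_m M$ is Cohen--Macaulay. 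Without this input, your argument does not close, particularly for the boundary value $m=\ell$ where you need $\depth\geq 1$.

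The paper's proof avoids the intersection complex entirely and instead exploits the $V$-filtration description directly. Using Theorem~\ref{thm_CD} and the definition of $\widetilde\alpha(Z)$, one shows that $\gr^F_m\Q_Z^H[d]$ is a direct sum of copies of $\cO_Z$ whenever $r\leq m<\ell$, and for such $m$ the relevant $\cE xt^j$ vanish because $j\geq m+n-\ell>r$. For the remaining case $m=\ell$ (where only $j=n$ is at stake), one embeds $\gr^F_{\ell}\Q_Z^H[d]$ into $\gr^F_{\ell}B_\bff/(t_1,\ldots,t_r)\gr^F_{\ell}B_\bff\simeq\bigoplus\cO_Z$ and uses that $\cE xt^{n+1}_{\cO_X}(-,\cO_X)=0$ together with $\cE xt^n_{\cO_X}(\cO_Z,\cO_X)=0$ (since $r<n$). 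In depth language this embedding already gives $\depth\,\gr^F_{\ell}\Q_Z^H[d]\geq 1$, which is exactly the bound you need at $m=\ell$; but the point is that one gets it from the explicit $V$-filtration computation, not from any general Cohen--Macaulayness of the Hodge graded pieces of $\mathrm{IC}_Z\Q^H$.
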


In order to prove the claim, we may and will assume that $X$ is affine and $Z$ is defined by $f_1,\ldots,f_r\in\cO_X(X)$, so we can make use
of the corresponding $V$-filtration. By Theorem~\ref{thm_CD}, for every $\ell\in\Z$, we have an isomorphism
$$F_{\ell}\Q^H_Z[d]\simeq \ker\big(F_{\ell}{\rm Gr}_V^r(B_{\bff})\overset{\partial_{t_1},\ldots,\partial_{t_r}}\longrightarrow
\bigoplus_{i=1}^rF_{\ell}{\rm Gr}_V^{r-1}(B_{\bff})(-1)\big).$$
Suppose now that $\widetilde{\alpha}(Z)>\ell$. In this case, by definition of the minimal exponent we have
$F_{\ell+1}B_{\bff}\subseteq V^{>r-1}B_{\bff}$ and $F_{\ell}B_{\bff}
\subseteq V^rB_{\bff}$. We thus conclude that 
\begin{equation}\label{eq2_gen_level}
F_{\ell}\Q^H_Z[d]\simeq F_{\ell}{\rm Gr}_V^r(B_{\bff})\simeq F_{\ell}B_{\bff}/F_{\ell}V^{>r}B_{\bff}.
\end{equation}
On the other hand, it follows from \cite[Theorem~1.1]{CD} that we have
$$F_{\ell}V^{>r}B_{\bff}=\sum_{i=1}^rt_i\cdot F_{\ell}V^{>r-1}B_{\bff}=\sum_{i=1}^rt_i\cdot F_{\ell}B_{\bff},$$
so that (\ref{eq2_gen_level}) gives
$$F_{\ell}\Q^H_Z[d]\simeq F_{\ell}B_{\bff}/(t_1,\ldots,t_r)F_{\ell}B_{\bff},$$
and thus
$${\rm Gr}_{\ell}^F\Q_Z^H[d]\simeq {\rm Gr}^F_{\ell}B_{\bff}/(t_1,\ldots,t_r){\rm Gr}^F_{\ell}B_{\bff}.$$
Recall now that by definition we have 
 $F_\ell B_\bff = \gr^F_\ell B_\bff = 0$ if $\ell < r$ and ${\rm gr}^F_{\ell}B_{\bff}=\bigoplus_{|\beta| = \ell -r}\cO_X\partial_t^{\beta}$ if $\ell\geq r$,
 with each  $t_i$ acting as multiplication by $f_i$. We thus conclude that if $\ell\geq r$, then
 \begin{equation}\label{eq3_gen_level}
 {\rm Gr}_{\ell}^F\Q_Z^H[d]\simeq\bigoplus_{|\beta| = \ell -r}\cO_Z\partial_t^{\beta}.
 \end{equation}

We now proceed to prove the claim. Note that since $Z$ is singular, it follows from (\ref{eq_bd_min_exp}) that 
$\widetilde{\alpha}(Z)\leq n-\tfrac{1}{2}(d+1)$, hence $n-\lceil \widetilde{\alpha}(Z)\rceil\geq\lfloor (d+1)/2)\rfloor\geq 1$.

We first consider the case when $p > n- \lceil \widetilde{\alpha}(Z) \rceil$, so that $p>0$ and $\lceil \widetilde{\alpha}(Z) \rceil > n -p \geq j-p$ for all $j \in \{0,1,\dots, n\}$. By taking $\ell=j-p$, it follows from (\ref{eq3_gen_level}) that we have
\[ \gr^F_{j-p}\Q^H_Z[d] \simeq \begin{cases} 0& \text{if}\,\,j-p < r \\ \bigoplus_{|\beta| = j-p -r} \cO_Z \de_t^\beta& \text{if}\,\,j-p \geq r\end{cases}.\]
Clearly, the vanishing in (\ref{eq_claim}) holds if $j-p<r$.  If $j\geq r+p > r$, we use the fact that $Z$ is a complete intersection, so
locally we have the Koszul resolution of $\cO_Z$, of length $r$, by free $\cO_X$-modules. 
In particular, we have $\cE xt^j_{\cO_X}(\cO_Z,\cO_X) = 0$ for all $j>r$, proving the claim in this case. 

We next consider the case when $p = n -\lceil \widetilde{\alpha}(Z) \rceil$. If $j \in \{0,1,\dots, n-1\}$, then $\lceil \widetilde{\alpha}(Z) \rceil>j-p$ and 
we get the vanishing in (\ref{eq_claim}) as above. In order to complete the proof of the claim, it is thus enough to consider $j=n$ and show that
\begin{equation}
\cE xt^n_{\cO_X}\big(\gr^F_{\lceil \widetilde{\alpha}(Z) \rceil} \Q_Z^H[d],\cO_X\big) = 0.
\end{equation}

It follows from Theorem~\ref{thm_CD} that we have an inclusion 
$\gr^F_{\lceil \widetilde{\alpha}(Z) \rceil} \Q_Z^H[d] \subseteq \gr^F_{\lceil \widetilde{\alpha}(Z) \rceil} \gr^r_VB_\bff$.
Since ${\mathcal Ext}_{\cO_X}^{n+1}(-,\cO_X)=0$, we deduce using the long exact sequence of 
$\cE xt$ sheaves that we have a surjection
\[\cE xt^n_{\cO_X}\big(\gr^F_{\lceil \widetilde{\alpha}(Z) \rceil} \gr^r_V B_\bff,\cO_X\big) \to \cE xt^n_{\cO_X}
\big(\gr^F_{\lceil \widetilde{\alpha}(Z) \rceil}\Q_Z^H[d],\cO_X\big).\]
Therefore it is enough to show that the left term is 0.

Note now that it follows from \cite[Theorem~1.1]{CD} that
$$F_{\lceil \widetilde{\alpha}(Z) \rceil} V^{>r}B_\bff = (t_1,\ldots,t_r)F_{\lceil \widetilde{\alpha}(Z)\rceil}V^{>r-1}B_{\bff}=
(t_1,\dots, t_r)F_{\lceil \widetilde{\alpha}(Z) \rceil} B_\bff,$$
where the second equality follows from the definition of the minimal exponent.
Therefore we have
\[\begin{aligned} \gr^F_{\lceil \widetilde{\alpha}(Z) \rceil} \gr^r_V B_\bff&=\gr^F_{\lceil \widetilde{\alpha}(Z) \rceil}V^rB_{\bff}/(t_1,\ldots,t_r){\rm Gr}^F_{\lceil \widetilde{\alpha}(Z) \rceil}B_{\bff} \\
&\subseteq  \gr^F_{\lceil \widetilde{\alpha}(Z) \rceil}B_\bff/(t_1,\dots, t_r) \gr^F_{\lceil \widetilde{\alpha}(Z) \rceil}B_\bff. \end{aligned}\]
Using again the fact that ${\mathcal Ext}^{n+1}_{\cO_X}(-,\cO_X)=0$, we see that it is enough to show that
\[ \cE xt^n_{\cO_X}(\gr^F_{\lceil \widetilde{\alpha}(Z) \rceil}B_\bff/(t_1,\dots, t_r) \gr^F_{\lceil \widetilde{\alpha}(Z) \rceil}B_\bff,\cO_X) =0.\]
This follows from the fact that $\gr^F_{\lceil \widetilde{\alpha}(Z) \rceil}B_\bff/(t_1,\dots, t_r) \gr^F_{\lceil \widetilde{\alpha}(Z) \rceil}B_\bff$
is isomorphic to a direct sum of copies of $\cO_Z$ and ${\mathcal Ext}^n_{\cO_X}(\cO_Z,\cO_X)=0$, as follows using the 
Koszul resolution of $\cO_Z$ (note that $r<n$, since we assume that $Z$ is reduced and singular). This completes the proof of the claim and thus
the proof of the theorem.
\end{proof}

\section*{References}
\begin{biblist}

\bib{BK}{article}{
   author={Brylinski, J.-L.},
   author={Kashiwara, M.},
   title={Kazhdan-Lusztig conjecture and holonomic systems},
   journal={Invent. Math.},
   volume={64},
   date={1981},
   pages={387--410},
}

\bib{BMS}{article}{
   author={Budur, N.},
   author={Musta\c{t}\u{a}, M.},
   author={Saito, M.},
   title={Bernstein-Sato polynomials of arbitrary varieties},
   journal={Compos. Math.},
   volume={142},
   date={2006},
   number={3},
   pages={779--797},
}

\bib{DuBois}{article}{
   author={Du Bois, P.},
   title={Complexe de de Rham filtr\'{e} d'une vari\'{e}t\'{e} singuli\`ere},
   language={French},
   journal={Bull. Soc. Math. France},
   volume={109},
   date={1981},
   pages={41--81},
}

\bib{CD}{article}{
author={Chen, Q.},
author={Dirks, B.},
title={On $V$-filtration, Hodge filtration and Fourier transform},
journal={preprint arXiv:2111.04622, to appear in Selecta Math.}, 
date={2021},
}

\bib{CDMO}{article}{
author={Chen, Q.},
author={Dirks, B.},
author={Musta\c{t}\u{a}, M.},
author={Olano, S.},
title={$V$-filtrations and minimal exponents for local complete intersections},
journal={preprint arXiv:2208.03277},
date={2022},
}

\bib{DE}{book}{
    AUTHOR = {Eisenbud, D.},
     TITLE = {Commutative algebra},
    SERIES = {Graduate Texts in Mathematics},
    VOLUME = {150},
      NOTE = {With a view toward algebraic geometry},
 PUBLISHER = {Springer-Verlag, New York},
      YEAR = {1995},
     PAGES = {xvi+785},
  }

\bib{FL1}{article}{
author={Friedman, R.},
author={Laza, R.},
title={Deformations of singular Fano and Calabi-Yau varieties},
journal={preprint arXiv:2203.04823},
date={2022},
}

\bib{FL2}{article}{
author={Friedman, R.},
author={Laza, R.},
title={Higher Du Bois and higher rational singularities},
note={With an appendix by M.~Saito}
journal={preprint arXiv:2205.04729},
date={2022},
}

\bib{FL3}{article}{
author={Friedman, R.},
author={Laza, R.},
title={The higher Du Bois and higher rational properties for isolated singularities},
journal={preprint arXiv:2207.07566},
date={2022},
}

\bib{GNPP}{book}{
   author={Guill\'en, F.},
   author={Navarro Aznar, V.},
   author={Pascual-Gainza, P.},
   author={Puerta, F.},
   title={Hyperr\'esolutions cubiques et descente cohomologique},
   journal={Springer Lect. Notes in Math.},
   number={1335},
   date={1988},
}

\bib{Saito_et_al}{article}{
author={Jung, S.-J.},
author={Kim, I.-K.},
author={Saito, M.},
author={Yoon, Y.},
title={Higher Du Bois singularities of hypersurfaces},
journal={preprint arXiv:2107.06619}, 
date={2021},
}

\bib{HTT}{book}{
   author={Hotta, R.},
   author={Takeuchi, K.},
   author={Tanisaki, T.},
   title={D-modules, perverse sheaves, and representation theory},
   publisher={Birkh\"auser, Boston},
   date={2008},
}

\bib{Kashiwara}{article}{
author={Kashiwara, M.},
title={Vanishing cycle sheaves and holonomic systems of differential
equations},
conference={
 title={Algebraic geometry},
 address={Tokyo/Kyoto},
date={1982},
},
book={
 series={Lecture Notes in Math.},
 volume={1016},
  publisher={Springer, Berlin},
 },
date={1983},
pages={134--142},
}

\bib{Malgrange}{article}{
  author= {Malgrange, B.},
     title= {Polynomes de {B}ernstein-{S}ato et cohomologie \'evanescente},
 booktitle= {Analysis and topology on singular spaces, {II}, {III}
              ({L}uminy, 1981)},
    series = {Ast\'erisque},
    volume = {101},
    pages = {243--267},
 publisher = {Soc. Math. France, Paris},
      date = {1983},
      }

\bib{MOPW}{article}{
     author={Musta\c t\u a, M.},
     author={Olano, S.},
     author={Popa, M.},
     author={Witaszek, J.},
     title={The Du Bois complex of a hypersurface and the minimal exponent},
     journal={preprint arXiv:2105.01245, to appear in Duke Math. J.},
     date={2021},
     }

\bib{MP5}{article}{
      author={Musta\c t\u a, M.},
      author={Popa, M.},
      title={Hodge ideals for $\Q$-divisors, $V$-filtration, and minimal exponent},
      journal={Forum of Math., Sigma}, 
      volume={8},
      date={2020}, 
      pages={41pp},
}

\bib{MP0}{article}{
     author={Musta\c{t}\u{a}, M.},
     author={Popa, M.},
     title={Hodge filtration, minimal exponent, and local vanishing},
     journal={ Invent. Math.},
     volume={220},
     date={2020},
     pages={453--478},
}

\bib{MP1}{article}{
   author={Musta\c{t}\u{a}, M.},
   author={Popa, M.},
   title={Hodge filtration on local cohomology, Du Bois complex and local
   cohomological dimension},
   journal={Forum Math. Pi},
   volume={10},
   date={2022},
   pages={Paper No. e22, 58},
}

\bib{MP2}{article}{
author={Musta\c{t}\u{a}, M.},
author={Popa, M.},
title={On $k$-rational and $k$-Du Bois local complete intersections},
journal={preprint arXiv:2207.08743},
date={2022},
}

\bib{Olano}{article}{
author={Olano, S.},
title={Weighted Hodge ideals of reduced divisors},
journal={preprint arXiv:2208.03271},
date={2022},
}

\bib{PetersSteenbrink}{book}{
   author={Peters, C.},
   author={Steenbrink, J.},
   title={Mixed Hodge structures},
   series={Ergebnisse der Mathematik und ihrer Grenzgebiete. 3. Folge. A
   Series of Modern Surveys in Mathematics [Results in Mathematics and
   Related Areas. 3rd Series. A Series of Modern Surveys in Mathematics]},
   volume={52},
   publisher={Springer-Verlag, Berlin},
   date={2008},
   pages={xiv+470},
}

\bib{Saito_GM}{article}{
   author={Saito, M.},
   title={Hodge filtrations on Gauss-Manin systems. I},
   journal={J. Fac. Sci. Univ. Tokyo Sect. IA Math.},
   volume={30},
   date={1984},
   number={3},
   pages={489--498},
}

\bib{Saito_MHP}{article}{
   author={Saito, M.},
   title={Modules de Hodge polarisables},
   journal={Publ. Res. Inst. Math. Sci.},
   volume={24},
   date={1988},
   pages={849--995},
}

\bib{Saito_MHM}{article}{
   author={Saito, M.},
   title={Mixed Hodge modules},
   journal={Publ. Res. Inst. Math. Sci.},
   volume={26},
   date={1990},
   pages={221--333},
}

\bib{Saito-B}{article}{
   author={Saito, M.},
   title={On $b$-function, spectrum and rational singularity},
   journal={Math. Ann.},
   volume={295},
   date={1993},
   pages={51--74},
}

\bib{Saito_microlocal}{article}{
   author={Saito, M.},
   title={On microlocal $b$-function},
   journal={Bull. Soc. Math. France},
   volume={122},
   date={1994},
   pages={163--184},
}

\bib{Saito-HC}{article}{
   author={Saito, M.},
   title={Mixed Hodge complexes on algebraic varieties},
   journal={Math. Ann.},
   volume={316},
   date={2000},
   pages={283--331},
}

\bib{Saito-MLCT}{article}{
      author={Saito, M.},
	title={Hodge ideals and microlocal $V$-filtration},
	journal={preprint arXiv:1612.08667}, 
	date={2016}, 
}

\end{biblist}

\end{document}